\documentclass[a4paper]{amsart}
\usepackage{amsmath, amssymb, amsthm}
\usepackage{mathtools}
\usepackage{tikz-cd}
\usepackage{enumitem}
\usepackage{hyperref}
\usepackage{comment}
\usepackage{enumitem}

\usepackage{amscd}

\usepackage{latexsym}

\usepackage[all]{xy}
\usepackage[utf8]{inputenc}

\usepackage{amsfonts}

\usepackage{booktabs}
\usepackage{multirow}
\usepackage{colortbl}

\usepackage{graphicx}

\usepackage[table]{}

\theoremstyle{definition}
\newtheorem{construction}{Construction}[section]
\newtheorem{definition}[construction]{Definition}
\newtheorem{example}[construction]{Example}

\theoremstyle{plain}
\newtheorem{proposition}[construction]{Proposition}
\newtheorem{lemma}[construction]{Lemma}
\newtheorem{theorem}[construction]{Theorem}
\newtheorem{corollary}[construction]{Corollary}
\newtheorem{openp}[construction]{Open Problems}

\theoremstyle{remark}

\newtheorem{remark}[construction]{Remark}

\newcommand{\Hom}[3]{\operatorname{Hom}_{#1}(#2,#3)}
\newcommand{\End}[2]{\operatorname{End}_{#1}(#2)}
\newcommand{\Ext}[4]{\operatorname{Ext}^{#1}_{#2}(#3,#4)}

\newcommand{\rfmod}[1]{\mbox{\rm{mod}--}{#1}}
\newcommand{\rmod}[1]{\mbox{\rm{Mod}--}{#1}}

\newcommand{\lmod}[1]{{#1}\mbox{--\rm{Mod}}}

\newcommand{\pd}[2]{\mbox{\rm{proj.dim\,}}_{#1}{#2}}

\newcommand{\id}[2]{\mbox{\rm{inj.dim\,}}_{#1}{#2}}

\DeclareMathOperator{\coker}{coker}
\DeclareMathOperator{\im}{Im}
\DeclareMathOperator{\Ker}{Ker}

\DeclareMathOperator{\Add}{Add}
\DeclareMathOperator{\Prod}{Prod}
\DeclareMathOperator{\lgd}{l.gl.dim}
\DeclareMathOperator{\rgd}{r.gl.dim}
\DeclareMathOperator{\lfd}{l.fin.dim}
\DeclareMathOperator{\rfd}{r.fin.dim}
\DeclareMathOperator{\lFd}{l.Fin.dim}
\DeclareMathOperator{\rFd}{r.Fin.dim}

\newcommand{\La}{\Lambda}
\newcommand{\catC}{\mathcal{C}}
\newcommand{\catD}{\mathcal{D}}

\newcommand{\smat}[4]{\begin{psmallmatrix} #1 & #2 \\ #3 & #4 \end{psmallmatrix}}

\begin{document}

\title{Tilting modules for the Cummings construction}

\author{Jan Trlifaj and Mykyta Dubov}
\address{Charles University, Faculty of Mathematics
and Physics, Department of Algebra \\
Sokolovsk\'{a} 83, 186 75 Prague 8, Czech Republic}
\email{trlifaj@karlin.mff.cuni.cz}
\email{mykyta.dubov356@student.cuni.cz}

\date{\today}
\begin{abstract} Finiteness of the right little finitistic dimension of a finite dimensional algebra $\La$ is known to be equivalent to existence of a (possibly infinite dimensional) tilting right $\La$-module $T_f$ whose tilting class is $\{ T_f \}^{\perp_\infty} = (\mathcal P ^{< \infty})^{\perp}$, \cite{AT}. We use this equivalence to interpret the recent surprising results of Cummings \cite{C} concerning the asymmetry of left and right finitistic dimensions of the triangular matrix algebras $\tilde{A}$ built from arbitrary basic finite dimensional algebras $A$. In particular, we determine the structure of the tilting right $\tilde{A}$-module $T_f$ in the case when the algebra $A$ has finite right global dimension.
\end{abstract}

\thanks{Research supported by grant GA\v CR 26-22734S}

\subjclass[2020]{Primary: 16E10 Secondary: 16D70, 16D90, 16G10, 18G15}
\keywords{Finitistic dimensions of algebras, triangular matrix algebras, tilting modules, tilting classes.}

\maketitle

\section{Introduction}\label{intro}

Cummings has recently proved a surprising result concerning the left-right asymmetry of the finitistic dimensions of finite-dimensional algebras: given an algebraically closed field $k$ and any basic finite dimensional $k$-algebra $A$ of right little finitistic dimension $\rfd {A} = n$, there exists a finite-dimensional $k$-algebra $\tilde{A}$ with $\rfd {\tilde{A}} = m$ such that 
$n \leq m \leq n + 1$, but the left little finitistic dimension $\lfd {\tilde{A}} = 0$, \cite{C}. 

By \cite{AT}, the fact that $\rfd {R} = n < \infty$ for a right noetherian ring $R$ is equivalent to the existence of a possibly infinitely generated $n$-tilting (right $R$-) module $T_{f}$ whose tilting class is $\{ T_{f} \}^{\perp_\infty} = (\mathcal P ^{< \infty})^{\perp}$. Here, $\mathcal P ^{< \infty}$ denotes the class of all finitely generated modules of finite projective dimension, and for a class of modules $\mathcal C$, $\mathcal C ^{\perp} := \{ N \in \rmod R \mid \Ext 1RCN = 0 \hbox{ for all } C \in \mathcal C \}$, and $\mathcal C ^{\perp_\infty} := \{ N \in \rmod R \mid \Ext iRCN = 0 \hbox{ for all } 1 \leq i < \infty \hbox{ and } C \in \mathcal C \}$.

While $T_{f}$ is unique up to equivalence of tilting modules, it may be hard to determine its structure even for finite dimensional algebras of right little finitistic dimension $1$, as witnessed by the case of the IST-algebra investigated in \cite{St}. Indeed, $T_{f}$ can be taken finitely generated only if the category $\mathcal P ^{< \infty}$ is contravariantly finite in $\rfmod R$, cf.\ \cite[Theorem 4.2]{AT}. However, if $R$ has finite right global dimension, or if $R$ is Iwanaga-Gorenstein, then there is a simple choice for $T_f$, namely the minimal injective cogenerator of $\rmod R$. 

\medskip
Cummings' result implies that if there is a finite dimensional $k$-algebra $A$ that fails the Second Finitistic Dimension Conjecture (i.e., $\rfd A = \infty$), then there is one that fails it too, but its left little finitistic dimension equals $0$. Though the Second Finitistic Dimension Conjecture has been proven in many particular cases, it remains open in general. In contrast, the First Finitistic Dimension Conjecture, claiming that $\rfd A = \rFd A$ for each finite dimensional $k$-algebra $A$, is known to fail. In fact, the difference $\rFd A - \rfd A$ can be arbitrarily big, see \cite{H} and \cite{Sm}.  

\medskip
In the present paper, we prove several general results about modules of finite projective dimension over the Cummings algebra $\tilde{A}$. In Theorem \ref{generalrel}, we then relate, for each $n \geq 1$, the $n$-tilting classes in $\rmod {\tilde{A}}$ to ($n-1$)-tilting classes in $\rmod A$. In the final section, we concentrate on the case when $A$ has right global dimension $n < \infty$. First, we show that in that case, the algebra $\tilde{A}$ has infinite right global dimension, but both its right big, and its right little, finitistic dimensions equal $n+1$. Our main result, Theorem \ref{main}, then determines the structure of the ($n+1$)-tilting $\tilde{A}$-module $\tilde{T}_{f}$.

\section{Preliminaries}\label{prelim}

\subsection{Triangular matrix rings and their modules}\label{trmr}

The Cummings construction in \cite{C} is a particular instance of the construction of a triangular matrix ring studied in detail in \cite[III.2]{ARS}, which in turn is a special case of a trivial extension investigated in \cite[Section 4]{FGR}. \footnote{Cummings' result has recently been extended to appropriate trivial extensions in \cite{K}.}

We now briefly recall several results relevant for us from \cite[III.2]{ARS}. Notice that while \cite[III.2]{ARS} considers left modules, we will consider both left and right modules, so we will need some of the results in two, mutually dual, forms. 

\medskip
In what follows, $k$ will be a field, $A$ and $B$ be finite-dimensional $k$-algebras, and ${}_B M_A$ a $(B,A)$-bimodule. We will stick to the convention that homomorphisms are written and composed from the opposite side of the scalars. So homomorphism of (right $R$-) modules will be acting from the left, and vice versa.

\begin{definition}\label{trimr} \rm The \emph{triangular matrix $k$-algebra} $\La$ associated to $A$, $B$, and $M$ is the set
\[
\La \;=\; \begin{pmatrix} A & 0 \\ M & B \end{pmatrix}
\;=\; \left\{ \begin{pmatrix} a & 0 \\ m & b \end{pmatrix} : a \in A,\; m \in M,\; b \in B \right\},
\]
equipped with componentwise addition, and the multiplication
\[
\begin{pmatrix} a & 0 \\ m & b \end{pmatrix}
\begin{pmatrix} a' & 0 \\ m' & b' \end{pmatrix}
=
\begin{pmatrix} aa' & 0 \\ ma' + bm' & bb' \end{pmatrix}.
\]
The identity element of $\La$ is $\smat{1_A}{0}{0}{1_B}$.
\end{definition}

Let $\varepsilon_1 = \smat{1_A}{0}{0}{0}$ and $\varepsilon_2 = \smat{0}{0}{0}{1_B}$. Clearly, $\{ \varepsilon_1, \varepsilon_2 \}$ is a complete set of orthogonal idempotents of $\La$ such that there are $k$-algebra isomorphisms $A \cong \varepsilon_1\Lambda\varepsilon_1$ and $B \cong \varepsilon_2\Lambda\varepsilon_2$. 

\medskip
We now recall a convenient presentation of the module categories $\rmod {\La}$ and $\lmod {\La}$ from \cite[III.2]{ARS}: 

\begin{definition}\label{catC} \rm Define $\catC_\La$ as the category whose
\begin{itemize}[leftmargin=2em]
\item objects are the triples $(N, P, f)$ with $N \in \rmod A$, $P \in \rmod B$, and $f: P \otimes_B M \to N$ a homomorphism of right $A$-modules,
\item morphisms $(N, P, f) \to (N', P', f')$ are the pairs $(\alpha, \beta)$ where $\alpha: N \to N'$ is an $A$-homomorphism, $\beta: P \to P'$ is a $B$-homomorphism, and the following diagram in $\rmod A$ is commutative 
\[
\begin{tikzcd}
P \otimes_B M \arrow[r, "\beta \otimes_B id"] \arrow[d, "f"] & P' \otimes_B M \arrow[d, "f'"] \\
N \arrow[r, "\alpha"] & N'
\end{tikzcd}
\]
\end{itemize}
Addition and composition of morphisms is defined componentwise; the identity morphism on $(N, P, f)$ is $(id_N, id_P)$. 
\end{definition}

\begin{proposition}\label{requiv} (dual of \cite[Proposition III.2.2]{ARS}) Consider the functor $F_r: \catC_\La \to \rmod {\La}$ defined 
\begin{itemize}[leftmargin=2em]
\item on objects by $F_r(N, P, f) = N \oplus P$, an abelian group with the right $\La$-action
\[
(n, p) \cdot \begin{pmatrix} a & 0 \\ m & b \end{pmatrix} = \bigl(na + f(p \otimes m),\; pb\bigr),
\]
\item on morphisms by $F_r(\alpha, \beta) = \alpha \oplus \beta: N \oplus P \to N' \oplus P'$.
\end{itemize}
Then $F_r$ is an equivalence of categories. The inverse equivalence $G_r: \rmod {\La} \to \catC_\La$ of $F$ is given
\begin{itemize}[leftmargin=2em]
\item \emph on objects by $G_r(U) = (U\varepsilon_1,\; U\varepsilon_2,\; f_U)$ where
$f_U: U\varepsilon_2 \otimes_B M \to U\varepsilon_1$ is defined by
\[
f_U(u\varepsilon_2 \otimes m) \;=\; u\varepsilon_2 \cdot \smat{0}{0}{m}{0} \;\in\; U\varepsilon_1,
\]
\item on morphisms by $G_r(g) = (g \restriction {U\varepsilon_1},\; g \restriction {U\varepsilon_2})$.
\end{itemize}
\end{proposition}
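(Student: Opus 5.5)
The plan is to verify directly that $F_r$ and $G_r$ are well defined functors, and then to exhibit natural isomorphisms $G_rF_r \cong \mathrm{id}_{\catC_\La}$ and $F_rG_r \cong \mathrm{id}_{\rmod{\La}}$.

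\textbf{Step 1: $F_r$ is well defined on objects.} For $(N,P,f)$ the formula defines a right $\La$-module structure on $N\oplus P$. Unitality is immediate. The main check is associativity. Write $x=\smat{a}{0}{m}{b}$ and $x'=\smat{a'}{0}{m'}{b'}$. Then
\[
((n,p)x)x' = \bigl((na+f(p\otimes m))a' + f(pb\otimes m'),\; pbb'\bigr).
\]
This must equal
\[
(n,p)(xx') = \bigl(naa' + f(p\otimes (ma'+bm')),\; pbb'\bigr).
\]
Equality follows from two facts: $f$ is $A$-linear, so $f(p\otimes m)a' = f(p\otimes ma')$; and the tensor product is balanced over $B$, so $pb\otimes m' = p\otimes bm'$. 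Distributivity is clear from bilinearity.

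\textbf{Step 2: $F_r$ is well defined on morphisms.} For a morphism $(\alpha,\beta)$, the map $\alpha\oplus\beta$ is $\La$-linear. Indeed, $\alpha(na+f(p\otimes m)) = \alpha(n)a + f'(\beta(p)\otimes m)$, by $A$-linearity of $\alpha$ together with the commutative square $\alpha f = f'(\beta\otimes id)$. Functoriality holds componentwise.

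\textbf{Step 3: $G_r$ is well defined.} Since $\varepsilon_1\La\varepsilon_1\cong A$ and $\varepsilon_2\La\varepsilon_2\cong B$, the spaces $U\varepsilon_1$ and $U\varepsilon_2$ are right $A$- and $B$-modules. The map $(u\varepsilon_2,m)\mapsto u\varepsilon_2\smat{0}{0}{m}{0}$ lands in $U\varepsilon_1$, because $\smat{0}{0}{m}{0}=\smat{0}{0}{m}{0}\varepsilon_1$. It is $B$-balanced, since $\smat{0}{0}{0}{b}\smat{0}{0}{m}{0}=\smat{0}{0}{bm}{0}$. It is $A$-linear, since $\smat{0}{0}{m}{0}\smat{a}{0}{0}{0}=\smat{0}{0}{ma}{0}$. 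Hence $f_U$ is a well defined $A$-homomorphism. A $\La$-map $g$ commutes with right multiplication by $\varepsilon_i$, so it restricts to $g\restriction U\varepsilon_i$, and the required square commutes because $g$ commutes with the action of $\smat{0}{0}{m}{0}$.

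\textbf{Step 4: the natural isomorphisms.} For $U\in\rmod{\La}$, the decomposition $U = U\varepsilon_1\oplus U\varepsilon_2$ gives a $k$-isomorphism $U\to F_rG_r(U)$, $u\mapsto(u\varepsilon_1,u\varepsilon_2)$. To see it is $\La$-linear, expand $ux$ using
\[
x=\varepsilon_1 x\varepsilon_1+\varepsilon_2x\varepsilon_1+\varepsilon_2x\varepsilon_2, \qquad \varepsilon_1x\varepsilon_2=0.
\]
Then $(ux)\varepsilon_1 = u\varepsilon_1 a + f_U(u\varepsilon_2\otimes m)$ and $(ux)\varepsilon_2 = u\varepsilon_2 b$, which is exactly the action on $F_rG_r(U)$. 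Naturality in $U$ is clear.

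Conversely, for $(N,P,f)$ we have $(N\oplus P)\varepsilon_1 = N$ and $(N\oplus P)\varepsilon_2=P$, with the same $A$- and $B$-actions. Moreover
\[
f_{F_r(N,P,f)}\bigl((0,p)\otimes m\bigr) = (0,p)\smat{0}{0}{m}{0} = (f(p\otimes m),0),
\]
so $G_rF_r = \mathrm{id}$ up to the canonical identifications, naturally.

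The only step with real content is the associativity check in Step 1, together with its mirror in Step 4. There one must track carefully that the off-diagonal entry $m$ acts from $P$ into $N$ (with homomorphisms written on the left), and that this is compatible with the balanced tensor product.
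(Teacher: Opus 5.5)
Your direct verification is correct and complete. The two facts in the associativity check ($A$-linearity of $f$ and $B$-balancedness of $\otimes_B$) are what is needed, as are the well-definedness of $f_U$ via the identities for $\smat{0}{0}{m}{0}$ and the two natural isomorphisms coming from $1=\varepsilon_1+\varepsilon_2$ and $\varepsilon_1 x\varepsilon_2=0$.

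The paper gives no argument of its own beyond citing the dual of \cite[Proposition III.2.2]{ARS}, that is, the left-module result applied to the opposite algebra. Your proof is that same standard argument carried out directly for right modules.
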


For left $\La$-modules, we have

\begin{definition}\label{catD} \rm Define $\catD_\La$ as the category whose
\begin{itemize}[leftmargin=2em]
\item objects are triples $(N, P, f)$ with $N \in \lmod A$, $P \in \lmod B$, and $f: M \otimes_A N \to P$ a morphism of left $B$-modules,
\item morphisms $(N, P, f) \to (N', P', f')$ are pairs $(\alpha, \beta)$ where $\alpha: N \to N'$ is a left $A$-homomorphism, $\beta: P \to P'$ is a left $B$-homomorphism, and the following diagram in $\lmod B$ is commutative 
\[
\begin{tikzcd}
M \otimes_A N \arrow[r, "id \otimes_A \alpha"] \arrow[d, "f"] & M \otimes_A N^\prime \arrow[d, "f'"] \\
P \arrow[r, "\beta"] & P'
\end{tikzcd}
\]
\end{itemize}
\end{definition}

\begin{proposition}\label{lequiv} \cite[Proposition III.2.2]{ARS} Consider the functor $F_l: \catD_\La \to \lmod {\La}$ defined 
\begin{itemize}[leftmargin=2em]
\item on objects by $F_l(N, P, f) = N \oplus P$ as an abelian group, with the left $\La$-action
\[
\begin{pmatrix} a & 0 \\ m & b \end{pmatrix} \cdot (n, p) = \bigl(an, f(m \otimes p) + bp\bigr),
\]
\item on morphisms by $F_l(\alpha, \beta) = \alpha \oplus \beta: N \oplus P \to N' \oplus P'$.
\end{itemize}
Then $F_l$ is an equivalence of categories. 
\end{proposition}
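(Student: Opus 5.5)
The plan is to check directly that $F_l$ lands in $\lmod{\La}$, is a functor, and has a quasi-inverse $G_l$ modelled on $G_r$ from Proposition \ref{requiv}.

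\textbf{Step 1: $F_l$ is well defined.} Fix $(N,P,f)\in\catD_\La$. The action
\[
\begin{pmatrix} a & 0 \\ m & b \end{pmatrix} \cdot (n, p) = \bigl(an,\ f(m \otimes n) + bp\bigr)
\]
is bilinear, and the identity $\smat{1_A}{0}{0}{1_B}$ acts trivially. For associativity, apply the product $\smat{aa'}{0}{ma'+bm'}{bb'}$ to $(n,p)$. The second coordinate becomes $f(ma'\otimes n)+f(bm'\otimes n)+bb'p$. Acting successively instead gives second coordinate $f(m\otimes a'n)+b\,f(m'\otimes n)+bb'p$. The two agree because $ma'\otimes n=m\otimes a'n$ in $M\otimes_A N$, and because $f$ is left $B$-linear, so $f(bm'\otimes n)=b f(m'\otimes n)$. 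For a morphism $(\alpha,\beta)$, the map $\alpha\oplus\beta$ is $\La$-linear exactly when $\beta f(m\otimes n)=f'(m\otimes\alpha n)$, which is the commutativity of the square in Definition \ref{catD}. Functoriality of $F_l$ is clear.

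\textbf{Step 2: the inverse $G_l$.} For $U\in\lmod{\La}$, set
\[
G_l(U)=(\varepsilon_1U,\ \varepsilon_2U,\ f_U), \qquad f_U(m\otimes \varepsilon_1u)=\smat{0}{0}{m}{0}\varepsilon_1u .
\]
This element lies in $\varepsilon_2U$, since $\smat{0}{0}{m}{0}=\varepsilon_2\smat{0}{0}{m}{0}\varepsilon_1$. The map $f_U$ is $A$-balanced and $B$-linear by the matrix identities $\smat{0}{0}{ma}{0}=\smat{0}{0}{m}{0}\smat{a}{0}{0}{0}$ and $\smat{0}{0}{bm}{0}=\smat{0}{0}{0}{b}\smat{0}{0}{m}{0}$, so it is well defined on the tensor product. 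A $\La$-map $g$ commutes with the idempotents $\varepsilon_1,\varepsilon_2$, so it restricts to $(g\restriction\varepsilon_1U,\ g\restriction\varepsilon_2U)$, and these restrictions satisfy the required commutative square.

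\textbf{Step 3: natural isomorphisms.} Since $U=\varepsilon_1U\oplus\varepsilon_2U$ as abelian groups, the identity gives $F_lG_l(U)\cong U$. It is $\La$-linear: decompose $\smat{a}{0}{m}{b}=\smat{a}{0}{0}{0}+\smat{0}{0}{m}{0}+\smat{0}{0}{0}{b}$ and compute each summand's action on $\varepsilon_1u+\varepsilon_2u$. For the other composite, $\varepsilon_1(N\oplus P)=N\oplus 0$ and $\varepsilon_2(N\oplus P)=0\oplus P$, and under these identifications the induced map is $f$. Hence $G_lF_l\cong\mathrm{id}$, and naturality of both isomorphisms is immediate.

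The only genuinely delicate point is Step 2, namely checking that $f_U$ is well defined on $M\otimes_AN$ and $B$-linear. Once the placement of $m$ in the lower-left entry is tracked correctly through the matrix identities, the rest is routine bookkeeping.
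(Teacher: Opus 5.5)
Your proposal is correct and is essentially the standard argument behind the cited \cite[Proposition III.2.2]{ARS}: you check directly that $F_l$ is a well-defined functor and exhibit the idempotent-based quasi-inverse $G_l(U)=(\varepsilon_1U,\varepsilon_2U,f_U)$, which mirrors the inverse $G_r$ recorded in Proposition~\ref{requiv} (the paper gives no proof beyond the citation). You also rightly take the second coordinate of the action to be $f(m\otimes n)+bp$, which silently corrects the misprint $f(m\otimes p)$ in the statement.
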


Notice that the categories $\rmod A$ and $\rmod B$ can be embedded into the category $\catC_\La$ in several ways: 

First, the functor $H_A: \rmod A \to \catC_\La$ defined by $H(N) = (N, 0, 0)$ and $H_A(\alpha) = (\alpha, 0)$ is a fully faithful exact embedding. In this way, $\rmod A$ may be viewed as a full subcategory of $\catC_\La$. 

A different fully faithful embedding is provided by the functor $K_A: \rmod A \to \catC_\La$ defined by $K_A(N) = (N,\; \Hom{A}{M}{N},\; \phi_N)$ and $K_A(\alpha) = (\alpha,\; \Hom {A}{M}{\alpha})$ where $\phi_N: \Hom{A}{M}{N} \otimes_B M \to N$ is the evaluation map $\phi_N(\varphi \otimes m) = \varphi(m)$. 

There is also the fully faithful exact embedding $H_B: \rmod B \to \catC_\La$ defined by $H_B(P) = (0, P, 0)$ and $H_B(\beta) = (0,\beta)$, and the fully faithful embedding $K_B: \rmod B \to \catC_\La$ defined by $K_B(P) = (P \otimes_B M, P, id)$ and $K_B(\beta) = (\beta \otimes_B id,\beta)$.

Analogous functors are available for embeddings of the categories $\lmod A$ and $\lmod B$ into the category $\catD_\La$.  

\medskip
We recall from \cite[Proposition III.2.5]{ARS} the structure of simple, projective and injective modules in $\lmod {\La}$ and $\rmod {\La}$. Using the category equivalences from Propositions \ref{requiv} and \ref{lequiv}, one obtains the following characterizations:

\begin{proposition}\label{simp-proj-inj} 
\begin{itemize}[leftmargin=2em]
\item[{(i)}] The simple objects in $\catC_\La$ are exactly the triples of the form $H_A(S) = (S,0,0)$ and $H_B(T) = (0,T,0)$ where $S$ is a simple right $A$-module and $T$ a simple right $B$-module.       
\item[{(ii)}]The indecomposable projective objects in $\catC_\La$ are exactly the triples of the form $H_A(N) = (N,0,0)$ and $K_B(P) = (P \otimes_B M,P,id)$ where $N$ is an indecomposable projective right $A$-module and $P$ an indecomposable projective right $B$-module. 
\item[{(iii)}] The indecomposable injective objects in $\catC_\La$ are exactly the triples of the form $H_B(P) = (0,P,0)$ and $K_A(N) = (N,\Hom{A}{M}{N},\phi_N)$ where $N$ is an indecomposable injective right $A$-module and $P$ an indecomposable injective right $B$-module.	
\item[{(iv)}] The simple objects in $\catD_\La$ are exactly the triples of the form $(S,0,0)$ and $(0,T,0)$ where $S$ is a simple left $A$-module and $T$ a simple left $B$-module.
\item[{(v)}] The indecomposable projective objects in $\catD_\La$ are exactly the triples of the form $(N,M \otimes_A N,id)$ and $(0,P,0)$ where $N$ is an indecomposable projective left $A$-module and $P$ an indecomposable projective left $B$-module. 
\item[{(vi)}] The indecomposable injective objects in $\catD_\La$ are exactly the triples of the form $(\Hom {B}{M}{P},P,\psi_P)$ and $(N,0,0)$ where $N$ is an indecomposable injective left $A$-module, $P$ an indecomposable injective left $B$-module, and $\psi_P(m \otimes_A g) = (m)g$ for all $m \in M$ and $g \in \Hom {B}{M}{P}$.
\end{itemize}
\end{proposition}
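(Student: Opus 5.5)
The plan is to transport the known description of simple, projective and injective left $\La$-modules from \cite[Proposition III.2.5]{ARS} along the equivalences $F_l$ and $G_r$ of Propositions \ref{lequiv} and \ref{requiv}. Items (iv)--(vi) are then essentially a restatement of \cite{ARS}, and items (i)--(iii) follow by the dual argument. In each case I will also verify the claims directly in the categories $\catC_\La$ and $\catD_\La$, using the right exactness of the tensor product and the adjunction $- \otimes_B M \dashv \Hom{A}{M}{-}$.

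For (i), let $(N,P,f)$ be a nonzero object. If $P \neq 0$, then $(0,P,0)$ is not in general a subobject, but $(N,0,0)$ is always a subobject, via $(id_N,0)$: the required square commutes because its source $0\otimes_B M$ is zero. Consequently a simple object with $N\neq 0$ must have $P=0$, and then $N$ must be simple; this gives $H_A(S)$. If $N=0$, then $f=0$, and simplicity forces $P$ to be simple, which gives $H_B(T)$. Conversely, the subobjects of $(S,0,0)$ and $(0,T,0)$ are clearly trivial.

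For (ii), I will compute the Hom-functors these objects represent.
\begin{itemize}
\item A morphism $(N',0,0)\to(N,P,f)$ is just an $A$-map $N'\to N$. Hence $\operatorname{Hom}(H_A(N'),-)\cong \Hom{A}{N'}{(-)_1}$, which is exact when $N'$ is projective.
\item A morphism $K_B(P')\to(N,P,f)$ is determined by $\beta:P'\to P$, since the commutativity forces $\alpha=f\circ(\beta\otimes id)$. Hence $\operatorname{Hom}(K_B(P'),-)\cong\Hom{B}{P'}{(-)_2}$, which is exact when $P'$ is projective.
\end{itemize}
Both functors are exact because exactness in $\catC_\La$ is componentwise, as $F_r$ is an equivalence with $\rmod\La$. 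Since $\La_\La=(A,0,0)\oplus(M,B,id)=H_A(A)\oplus K_B(B)$, every projective object is a summand of a direct sum of copies of these objects. Krull--Schmidt for finite-dimensional modules then identifies the indecomposable projectives; for this I use that $H_A$ and $K_B$ are fully faithful, so they preserve indecomposability.

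For (iii), I argue dually.
\begin{itemize}
\item A morphism $(N,P,f)\to(0,P',0)$ is just a $B$-map $P\to P'$.
\item A morphism $(N,P,f)\to K_A(N')$ is determined by $\alpha:N\to N'$: by adjunction, $\beta$ must be the adjoint of $\alpha f$.
\end{itemize}
Thus the corresponding Hom-functors are $\Hom{B}{(-)_2}{P'}$ and $\Hom{A}{(-)_1}{N'}$, which are exact for injective $P'$ and $N'$. To show that these objects give all injectives, I use the duality $D=\operatorname{Hom}_k(-,k)$, which sends the left projectives of (v) to right injectives. The computation of (v) itself is the left-module analogue of (ii).

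The main obstacle is checking that the adjunction argument in (iii) really produces $\beta$ compatibly with $\phi_{N'}$, i.e.\ that $\phi_{N'}\circ(\beta\otimes id)=\alpha f$. I expect this to be a routine unit/counit identity. Item (vi) uses the analogous adjunction $M\otimes_A-\dashv\Hom{B}{M}{-}$.
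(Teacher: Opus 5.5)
Your proposal is correct and follows the paper's route: the paper simply invokes \cite[Proposition III.2.5]{ARS} and transports it along $F_l$ and $G_r$. Your computation of the (co)represented functors $\Hom{A}{N'}{(-)_1}$, $\Hom{B}{P'}{(-)_2}$, $\Hom{B}{(-)_2}{P'}$ and $\Hom{A}{(-)_1}{N'}$ is a valid self-contained check of the right-module versions (i)--(iii), which the paper leaves implicit; the compatibility $\phi_{N'}(\beta(p)\otimes m)=\beta(p)(m)=\alpha f(p\otimes m)$ that you flagged is immediate from the definition of $\beta$.

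The only caveat is that Krull--Schmidt and the duality $D$ reach only finite-dimensional objects, while $\catC_\La$ is built from arbitrary modules. For the word ``exactly'' you should therefore add the standard fact that every indecomposable projective module over a finite-dimensional algebra is isomorphic to some $e\La$ (via projective covers), and every indecomposable injective one to some $E(S)$, a summand of $D(\La)$. The paper's citation of ARS glosses over this point as well.
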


\begin{remark}\label{pd-id} \rm Since the functor $H_A$ is exact and takes projective right $A$-modules to projective right $\La$-modules, it maps projective resolutions to projective resolutions. Hence $\pd{A}{N} = \pd{\La}{(N,0,0)}$ for each module $N \in \rmod A$. In particular, $\rgd {A} \leq \rgd {\tilde{A}}$, $\rFd {A} \leq \rFd {\tilde{A}}$, and $\rfd {A} \leq \rfd {\tilde{A}}$.
 
Similarly, the functor $H_B$ yields $\id{B}{P} = \id{\La}{(0,P,0)}$ for each module $P \in \rmod B$. Analogous statements hold for left $A$- and left $B$-modules. 
\end{remark}

\subsection{Infinite-dimensional tilting modules}\label{tilt}

Tilting theory originated in the setting of finitely generated modules over finite dimensional algebras. Its extension to general modules over general rings made it possible to encompass the setting of commutative rings, where all finitely generated titling modules are projective. As mentioned in the Introduction, even for finite dimensional algebras, it is important to admit infinite dimensional tilting modules, as the tilting module $T_{f}$ such that $\{ T_{f} \}^{\perp_\infty} = (\mathcal P ^{< \infty})^{\perp}$ can be taken finitely generated only if the category $\mathcal P ^{< \infty}$ is contravariantly finite in $\rfmod R$. 

\medskip
We now recall the basic facts of infinite dimensional tilting theory needed in the sequel. For more details and proofs, we refer to \cite[Part III]{GT}.

In the following, $R$ denotes a ring. A module $M \in \rmod R$ is \emph{strongly finitely presented} in case $M$ has a projective resolution consisting of finitely generated (projective) modules. Of course, if $R$ is right noetherian, then strongly finitely presented = finitely generated. For a class of modules $\mathcal C$, we let $\mathcal C ^{< \infty}$ denote the class of all strongly finitely presented modules in $\mathcal C$.

For each $n < \omega$, $\mathcal{P}_n$ denotes the class of all modules $M$ of projective dimension $\pd{R}{M} \leq n$. Moreover, we let $\mathcal{P} = \bigcup_{n < \omega} {\mathcal P _n}$ be the class of all modules of finite projective dimension.

\begin{definition}\label{deftilt} \rm
Let $T \in \rmod R$. Then $T$ is a \emph{tilting module} if it satisfies the following conditions:
\begin{itemize}[leftmargin=3em]
\item[\textup{(T1)}] $T \in \mathcal P$. 
\item[\textup{(T2)}] $\Ext {i}{R}{T}{T^{(\kappa)}} = 0$ for all $i \geq 1$ and all cardinals $\kappa$.
\item[\textup{(T3)}] There exist an $r \geq 0$ and an exact sequence $0 \to R \to T_0 \to \dots \to T_r \to 0$ such that $T_i \in \Add(T)$ for all $i \leq r$.
\end{itemize}
Here, $\Add(T)$ denotes the class of all direct summands of (possibly infinite) direct sums of copies of $T$. If $T \in \mathcal P _n$, then $T$ is called \emph{$n$-tilting}.

Two titing modules $T$ and $T^\prime$ are \emph{equivalent} in case $\Add(T) = \Add(T^\prime)$.
\end{definition}

\begin{definition}\label{defcotpair} \rm
Let $\mathcal{C} \subseteq \rmod R$. For $i \geq 1$, we define:
\begin{itemize}
\item[{(i)}]  $\mathcal{C}^{\perp} := \{ N \in \rmod R \mid \Ext 1RMN = 0 \ \forall\, M \in \mathcal{A} \}$,
\item[{(ii)}]  $\mathcal{C}^{\perp_{\infty}} := \{ N \in \rmod R \mid \Ext {i}{R}{C}{N} = 0 \ \forall i \geq 1, \, \forall\, C \in \mathcal{C} \}$,
\end{itemize}
The classes ${}^\perp \mathcal{C}$ and ${}^{\perp_{\infty}} \mathcal C$ are defined dually.  

A pair of classes of modules $(\mathcal{A}, \mathcal{B})$ is a \emph{cotorsion pair} if $\mathcal{A} = {}^{\perp}\mathcal{B}$ and $\mathcal{B} = \mathcal{A}^{\perp}$.
The cotorsion pair $(\mathcal{A}, \mathcal{B})$ is \emph{hereditary}, if $\Ext iRAB = 0$ for all $i \geq 1$, $A \in \mathcal A$ and $B \in \mathcal B$. For example, for each $n \geq 0$, $(\mathcal P_n,(\mathcal P_n)^\perp)$ is a hereditary cotorsion pair, cf.\ \cite[8.10]{GT}.

Let $\mathcal C$ be a class of modules. A hereditary cotorsion pair $(\mathcal{A}, \mathcal{B})$ is said to be \emph{generated} by $\mathcal C$ in case $\mathcal B = \mathcal C^{\perp_\infty}$.  

A hereditary cotorsion pair $\mathfrak C = (\mathcal{A}, \mathcal{B})$ is \emph{($n$-) tilting}, if $\mathfrak C$ is generated by $\mathcal C = \{ T \}$ where $T$ is an ($n$-) tilting module. In this case $\Add(T) = \mathcal{A} \cap \mathcal{B}$, \cite[13.10]{GT}.
\end{definition}

\begin{proposition}\label{tiltclasses} \cite[13.13]{GT} 
Let $T$ be a tilting module and $(\mathcal{A}_T, \mathcal{B}_T)$ be the tilting cotorsion pair generated by $T$. Then:
\begin{enumerate}[label=\textup{(\roman*)}]
\item $\mathcal{B}_T$ is called the \emph{tilting class} of $T$. It coincides with the class all \emph{$\Add(T)$-resolved} modules, i.e.\ the modules $M \in \rmod R$ admitting a (possibly infinite) resolution
\[
\cdots \longrightarrow T_2 \longrightarrow T_1
\longrightarrow T_0 \longrightarrow M \longrightarrow 0
\]
where $T_i \in \Add(T)$ for each $i \geq 0$.
\item The class $\mathcal{A}_T$ coincides with the class of all modules $M \in \rmod R$ admitting a finite coresolution
\[
0 \longrightarrow M \longrightarrow T^0 \longrightarrow T^1
\longrightarrow \cdots \longrightarrow T^r \longrightarrow 0
\]
where $r < \infty$ and $T^i \in \Add(T)$ for each $i \geq 0$.
\end{enumerate}
\end{proposition}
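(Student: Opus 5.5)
The plan is to prove (i) and (ii) separately, using dimension shifting against $\pd{R}{T}\le n$ (from (T1)) and the coresolution of $R$ given by (T3). Two preliminary observations will be used throughout.

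First, $\Add(T)\subseteq \mathcal A_T\cap\mathcal B_T$. By (T2), every summand of $T^{(\kappa)}$ lies in $\mathcal B_T=\{T\}^{\perp_\infty}$. Moreover $\Ext iR{T^{(\kappa)}}N\cong \prod \Ext iRTN$, so $\Add(T)\subseteq {}^{\perp_\infty}\mathcal B_T=\mathcal A_T$; here $\mathcal A_T={}^{\perp}\mathcal B_T={}^{\perp_\infty}\mathcal B_T$ because the pair is hereditary.

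Second, $(\mathcal P_n)^{\perp}\subseteq\mathcal B_T$. Indeed $T\in\mathcal P_n$, and $(\mathcal P_n,(\mathcal P_n)^\perp)$ is hereditary. Taking left perpendiculars gives $\mathcal A_T={}^\perp\mathcal B_T\subseteq{}^\perp((\mathcal P_n)^\perp)=\mathcal P_n$.

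\emph{(i), one inclusion.} Suppose $M$ has an $\Add(T)$-resolution, and let $K_j$ denote its syzygies, with $K_0=M$. Each short exact sequence $0\to K_{j+1}\to T_j\to K_j\to 0$ has $T_j\in\mathcal B_T$. Hence $\Ext iRT{K_j}\cong\Ext{i+1}RT{K_{j+1}}$ for $i\ge1$. Iterating $n$ times gives $\Ext iRTM\cong\Ext{i+n}RT{K_n}=0$, since $\pd{R}{T}\le n$. Thus $M\in\mathcal B_T$.

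\emph{(i), reverse inclusion.} Let $M\in\mathcal B_T$ and take the canonical map $\pi:T^{(\Hom RTM)}\to M$. To see that $\pi$ is surjective, apply $\Hom R{-}M$ to the (T3) sequence $0\to R\to T_0\to\dots\to T_r\to0$. Its cokernels lie in $\mathcal A_T$, since they have finite $\Add(T)$-coresolutions whose terms lie in $\mathcal A_T$, and $\mathcal A_T$ is closed under cokernels of monomorphisms. It follows that $\Hom R{T_0}M\to\Hom RRM=M$ is onto, so $M$ is $T$-generated and $\pi$ is onto. By construction, $\Hom RT\pi$ is also surjective. Put $K=\Ker\pi$. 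The long exact sequence gives $\Ext 1RTK=0$, and $\Ext iRTK\cong\Ext{i-1}RTM=0$ for $i\ge2$. So $K\in\mathcal B_T$, and iterating this step produces the resolution.

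\emph{(ii).} If $M$ has a finite $\Add(T)$-coresolution, then $M\in{}^{\perp_\infty}\mathcal B_T=\mathcal A_T$. This follows by dimension shifting from the right end of the coresolution, using $\Add(T)\subseteq\mathcal A_T$.

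Conversely, let $M\in\mathcal A_T$. The key input is that $(\mathcal A_T,\mathcal B_T)$ is a \emph{complete} cotorsion pair. It is cogenerated by a set: the syzygies of $T$ together with $T$ itself, since $\mathcal B_T=\{\Omega^{j}T\mid j\ge 0\}^{\perp}$. The Eklof--Trlifaj theorem then applies (cf.\ \cite[Part III]{GT}).

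Completeness yields $0\to M\to B^0\to C_1\to0$ with $B^0\in\mathcal B_T$ and $C_1\in\mathcal A_T$. Since $\mathcal A_T$ is closed under extensions, $B^0\in\mathcal A_T\cap\mathcal B_T=\Add(T)$. Repeating the construction gives sequences $0\to C_{j}\to B^{j}\to C_{j+1}\to 0$ with $C_0=M$ and all $B^j\in\Add(T)$. Because $B^j\in\mathcal B_T$, we get $\Ext iRT{C_{j+1}}\cong\Ext{i+1}RT{C_j}$. Hence $\Ext iRT{C_n}\cong\Ext{i+n}RTM=0$, so $C_n\in\mathcal A_T\cap\mathcal B_T=\Add(T)$. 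The coresolution therefore stops after $n$ steps, with $r=n$.

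The main obstacle is the completeness of the cotorsion pair in (ii). Everything else is dimension shifting, whereas completeness requires the set-theoretic Eklof--Trlifaj machinery; I would cite it from \cite{GT} rather than reprove it.
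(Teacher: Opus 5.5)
Your proof is correct and follows essentially the standard argument behind \cite[13.13]{GT}, which the paper cites without giving a proof of its own: $T$-generation via (T3) plus dimension shifting for (i), and iterated special $\mathcal B_T$-preenvelopes from completeness of the pair for (ii). Two small repairs are needed: the cokernels of the (T3) sequence lie in $\mathcal A_T$ because $\mathcal A_T$ is closed under kernels of epimorphisms (argue downward from $T_r$), not under cokernels of monomorphisms, which fails in general (e.g.\ $T=R$); and your preliminary only shows $\Add(T)\subseteq\mathcal A_T\cap\mathcal B_T$, so the reverse inclusion used in (ii) must be taken from \cite[13.10]{GT} or derived from your (i), since for $M\in\mathcal A_T\cap\mathcal B_T$ the canonical epimorphism $T^{(\Hom{R}{T}{M})}\to M$ has kernel in $\mathcal B_T$ and hence splits.
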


\begin{theorem}\label{chartilt} \cite[13.20]{GT}
Let $n < \omega$. Let $\mathfrak{C} = (\mathcal{A}, \mathcal{B})$ be a cotorsion pair.
The following assertions are equivalent:
\begin{enumerate}
\item $\mathfrak{C}$ is an $n$-tilting cotorsion pair;
\item $\mathfrak{C}$ is a hereditary cotorsion pair such that $\mathcal{A} \subseteq \mathcal{P}_n$, and $\mathcal{B}$ is closed under direct sums.
\end{enumerate}
\end{theorem}

\begin{lemma}\label{tiltshift} \cite[13.45]{GT}
Let $R$ be a ring and $n \geq 1$. Let $T$ be a tilting module of projective dimension $n$, and $\mathfrak{C} = (\mathcal{A}^\prime, \mathcal{B}^\prime)$ be the hereditary cotorsion pair generated by $\mathcal C = \{ \Omega(T) \}$, where $\Omega(T)$ is the first syzygy of $T$. Then $\mathfrak{C}$ is an ($n-1$)-tilting cotorsion pair.
\end{lemma}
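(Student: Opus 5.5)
We verify condition (2) of Theorem \ref{chartilt} for $\mathfrak C = (\mathcal A^\prime, \mathcal B^\prime)$, where $\mathcal B^\prime = \{\Omega(T)\}^{\perp_\infty}$ and $\mathcal A^\prime = {}^\perp \mathcal B^\prime$. Fix a short exact sequence $0 \to \Omega(T) \to P \to T \to 0$ with $P$ projective. Dimension shifting gives $\Ext iR{\Omega(T)}N \cong \Ext {i+1}RTN$ for all $i \geq 1$. Hence
\[
\mathcal B^\prime = \{ N \mid \Ext iRTN = 0 \hbox{ for all } i \geq 2 \}.
\]
The proof then has three parts: $\mathfrak C$ is a hereditary cotorsion pair, $\mathcal A^\prime \subseteq \mathcal P_{n-1}$, and $\mathcal B^\prime$ is closed under direct sums.

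\emph{Cotorsion pair and heredity.} Let $\mathcal S$ be the set of all syzygies $\Omega^j(\Omega(T))$, $j \geq 0$. Dimension shifting again shows $\mathcal S^{\perp} = \{\Omega(T)\}^{\perp_\infty} = \mathcal B^\prime$. A class of the form $\mathcal S^\perp$ for a set $\mathcal S$ is the right-hand class of a cotorsion pair, namely $({}^\perp(\mathcal S^\perp), \mathcal S^\perp)$. Thus $\mathfrak C$ is a cotorsion pair.

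For heredity it suffices to show that $\mathcal B^\prime$ contains the injectives and is closed under cokernels of monomorphisms. Injectives clearly lie in $\mathcal B^\prime$. For closure, take $0 \to X \to Y \to Z \to 0$ with $X, Y \in \mathcal B^\prime$. For $i \geq 2$, the long exact sequence places $\Ext iRTZ$ between $\Ext iRTY = 0$ and $\Ext {i+1}RTX = 0$, so $Z \in \mathcal B^\prime$. The standard argument then gives $\Ext iR{A}{B} = 0$ for all $i \geq 1$, $A \in \mathcal A^\prime$, $B \in \mathcal B^\prime$.

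\emph{$\mathcal A^\prime \subseteq \mathcal P_{n-1}$.} Since $\pd R T = n \geq 1$, we get $\Omega(T) \in \mathcal P_{n-1}$. The pair $(\mathcal P_{n-1}, (\mathcal P_{n-1})^\perp)$ is hereditary, so $(\mathcal P_{n-1})^\perp \subseteq \{\Omega(T)\}^{\perp_\infty} = \mathcal B^\prime$. Applying ${}^\perp(-)$ gives
\[
\mathcal A^\prime = {}^\perp \mathcal B^\prime \subseteq {}^\perp((\mathcal P_{n-1})^\perp) = \mathcal P_{n-1}.
\]

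\emph{Closure of $\mathcal B^\prime$ under direct sums.} This is the main step, because Ext does not commute with direct sums in the second variable. I would reduce it to properties of $(\mathcal A_T, \mathcal B_T)$.

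For any module $N$, take a special $\mathcal B_T$-preenvelope $0 \to N \to B \to A \to 0$ with $B \in \mathcal B_T$ and $A \in \mathcal A_T$; it exists because the tilting cotorsion pair is complete. Since $\Ext iRTB = 0$ for $i \geq 1$, we get $\Ext {i+1}RTN \cong \Ext iRTA$ for all $i \geq 1$. Hence $N \in \mathcal B^\prime$ if and only if $A \in \mathcal B_T$, that is, $A \in \mathcal A_T \cap \mathcal B_T = \Add(T)$.

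Now let $N_\alpha \in \mathcal B^\prime$ be a family of modules, with sequences $0 \to N_\alpha \to B_\alpha \to A_\alpha \to 0$ as above, so $A_\alpha \in \Add(T)$. Their direct sum is a short exact sequence $0 \to \bigoplus N_\alpha \to \bigoplus B_\alpha \to \bigoplus A_\alpha \to 0$. Here $\bigoplus B_\alpha \in \mathcal B_T$, since tilting classes are closed under direct sums (Theorem \ref{chartilt}). Also $\bigoplus A_\alpha \in \Add(T)$. The same dimension shift (which needs only $\bigoplus B_\alpha \in \mathcal B_T$) gives $\Ext {i+1}RT{\bigoplus N_\alpha} \cong \Ext iRT{\bigoplus A_\alpha} = 0$ for $i \geq 1$. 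Hence $\bigoplus N_\alpha \in \mathcal B^\prime$.

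By Theorem \ref{chartilt}, $\mathfrak C$ is an $(n-1)$-tilting cotorsion pair.
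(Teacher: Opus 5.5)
\emph{Verdict: your proof is correct.} The paper gives no proof of this lemma; it only cites \cite[13.45]{GT}, so there is no internal argument to compare against.

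You verify condition (2) of Theorem \ref{chartilt} for $(\mathcal A^\prime,\mathcal B^\prime)$, using $\mathcal B^\prime=\{N \mid \Ext{i}{R}{T}{N}=0 \text{ for all } i\geq 2\}$. This is the natural route to the cited result. All three parts are sound:
\begin{itemize}
\item heredity, via closure of $\mathcal B^\prime$ under cokernels of monomorphisms;
\item $\mathcal A^\prime\subseteq{}^\perp((\mathcal P_{n-1})^\perp)=\mathcal P_{n-1}$, since $\Omega(T)\in\mathcal P_{n-1}$;
\item closure of $\mathcal B^\prime$ under direct sums, via a dimension shift into $\mathcal B_T$.
\end{itemize}

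The last step uses more machinery than it needs. You never use that the cokernels $A_\alpha$ lie in $\mathcal A_T$. You only use that the middle terms and the cokernels lie in $\mathcal B_T$, together with closure of $\mathcal B_T$ under direct sums. So completeness of the tilting cotorsion pair and the identity $\mathcal A_T\cap\mathcal B_T=\Add(T)$ can be dropped.

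Instead, embed each $N_\alpha$ into an injective module $I_\alpha$, and put $C_\alpha=I_\alpha/N_\alpha$. Injectives lie in $\mathcal B_T$ and $\mathcal B_T$ is closed under direct sums, so $\bigoplus_\alpha I_\alpha\in\mathcal B_T$; this holds even when $\bigoplus_\alpha I_\alpha$ is not injective. Each $C_\alpha\in\mathcal B_T$ by the shift you already use, so $\bigoplus_\alpha C_\alpha\in\mathcal B_T$ as well. The same long exact sequence then gives, for all $i\geq 1$,
\[
\Ext{i+1}{R}{T}{\textstyle\bigoplus_\alpha N_\alpha}\cong\Ext{i}{R}{T}{\textstyle\bigoplus_\alpha C_\alpha}=0 .
\]
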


\begin{theorem}\label{fintype} \cite[13.46]{GT}
Let $n \geq 0$ and $T$ be an ($n$-) tilting module. Let $\mathfrak C = (\mathcal{A}_T, \mathcal{B}_T)$ be the hereditary cotorsion pair generated by $T$. 

Then $T$ and $\mathcal{B}_T$ are \emph{of finite type}, that is, there exists a set $\mathcal S$ consisting of strongly finitely presented modules of finite projective dimension ($\leq n$) which generates $\mathfrak C$. For example, one can take $\mathcal S = \mathcal{A}_T^{<\omega}$.    
\end{theorem}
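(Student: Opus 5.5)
The strategy is to show that every module of $\mathcal{A}_T$ is built from modules of $\mathcal{S} = \mathcal{A}_T^{<\omega}$ by transfinite extensions. Eklof's Lemma then identifies the classes $\mathcal{S}^{\perp_\infty}$ and $\mathcal{B}_T$.

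\textbf{Easy inclusion.} Since $\mathcal{S} \subseteq \mathcal{A}_T \subseteq \mathcal{P}_n$ and the pair is hereditary, we get $\mathcal{B}_T \subseteq \mathcal{S}^{\perp_\infty}$. Moreover, $\mathcal{S}$ is essentially a set, since there is only a set of isomorphism classes of strongly finitely presented modules.

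\textbf{Reverse inclusion.} It suffices to prove the following deconstruction statement: every $A \in \mathcal{A}_T$ admits a continuous filtration $0 = A_0 \subseteq A_1 \subseteq \dots \subseteq A_\sigma = A$ whose consecutive factors $A_{\alpha+1}/A_\alpha$ are isomorphic to modules in $\mathcal{S}$. Granting this, Eklof's Lemma gives $\mathcal{S}^{\perp_\infty} \subseteq A^{\perp_\infty}$ for every $A \in \mathcal{A}_T$. Applying it to $A = T$ yields $\mathcal{S}^{\perp_\infty} \subseteq \{T\}^{\perp_\infty} = \mathcal{B}_T$.

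I would prove the deconstruction statement in the following order.

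\begin{enumerate}
\item Show that $\mathcal{B}_T$ is closed under direct sums and products. Direct sums come from Theorem~\ref{chartilt}. Products hold because $\mathcal{B}_T$ is a right Ext-orthogonal class.
\item Show that $\mathcal{B}_T$ is closed under pure submodules, using the $\Add(T)$-resolutions of Proposition~\ref{tiltclasses}(i).
\item Deduce that $\mathcal{B}_T$ is definable, i.e.\ also closed under direct limits. This is the step I expect to be the main obstacle.
\item Conclude that every module in $\mathcal{A}_T$, together with all its syzygies, is a Mittag-Leffler module relative to $\mathcal{B}_T$.
\item Use this Mittag-Leffler property, via Hill's Lemma and a Kaplansky-type closure argument, to produce filtrations with countably presented factors in $\mathcal{A}_T$.
\item Refine these to filtrations with strongly finitely presented factors. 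Here one uses that a countably presented Mittag-Leffler module in $\mathcal{A}_T$ is a countable directed union of modules from $\mathcal{S}$ with splitting-type transition behaviour.
\end{enumerate}

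\textbf{The main obstacle (step 3).} Closure of $\mathcal{B}_T$ under direct limits is the heart of the Bazzoni--Herbera and Bazzoni--\v{S}\v{t}ov\'{\i}\v{c}ek theory. To attack it, I would first treat $n = 1$. There $\mathcal{B}_T = T^{\perp}$ is the class of modules generated by $T$. Using the sequence from (T3), one shows that a direct limit of $T$-generated modules remains in $T^{\perp}$ by proving that $T$ is Mittag-Leffler, via the characterization of Mittag-Leffler modules through products and tensor products. I would then pass to general $n$ by induction using Lemma~\ref{tiltshift}, as follows.
\begin{itemize}
\item The pair generated by $\Omega(T)$ is $(n-1)$-tilting, so by the induction hypothesis it has finite type, generated by a set $\mathcal{S}'$.
\item We have $\mathcal{B}_T = \{\Omega(T)\}^{\perp_\infty} \cap \operatorname{Ker}\operatorname{Ext}^1_R(T,-)$.
\item Definability of the first class is inherited from $\mathcal{S}'$.
\item The Ext$^1$-condition is handled by the $1$-dimensional Mittag-Leffler argument, applied to a module of projective dimension at most one built from a partial $\Add(T)$-coresolution.
\end{itemize}
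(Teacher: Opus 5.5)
The paper gives no proof of this statement; it is quoted from \cite[13.46]{GT}. So your outline has to stand on its own. The easy inclusion and the Eklof-lemma step are fine, but there is a genuine gap: the deconstruction statement you reduce to is false.

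Take $T=R$, which is $n$-tilting for every $n\geq 0$. Then $\mathcal A_T$ consists of the projective modules and $\mathcal S$ of the finitely generated projective ones. Let $R=C([0,1])$ be the ring of real-valued continuous functions, and let $I$ be the ideal of functions vanishing on some neighbourhood of $0$. Choose $b_m\in I$ with $b_m=1$ on $[1/m,1]$. Every $f\in I$ satisfies $f=fb_m$ for $m\gg 0$, so $I$ is countably generated and pure. Thus $R/I$ is countably presented and flat, hence of projective dimension at most $1$. So $I$ is a countably presented projective (in particular Mittag-Leffler) module in $\mathcal A_T$. On the other hand, $I_{\mathfrak m}=0$ for $\mathfrak m=\{g\mid g(0)=0\}$, and $\operatorname{Spec}R$ is connected. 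Hence every finitely generated projective submodule of $I$ has constant rank $0$, i.e.\ is zero. Consequently $I$ has no filtration with factors in $\mathcal S$, since its first nonzero term would be such a submodule. Nor is $I$ a directed union of members of $\mathcal S$. This refutes both your reduction and the claim in step~6, in a case where the theorem itself is trivial.

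What is true, and suffices, is weaker: $T$ is a \emph{direct summand} of an $\mathcal S$-filtered module. Granted countable type, it is equivalent that $\operatorname{Ext}^1_R(C,X)=0$ for all countably presented $C\in\mathcal A_T$ and $X\in\mathcal S^{\perp_\infty}$. The arguments behind \cite[13.46]{GT} (Bazzoni--Herbera for $n=1$, Bazzoni--\v{S}\v{t}ov\'{\i}\v{c}ek in general) work at this level. They present $C$ as the direct limit of a countable direct system of finitely presented modules, with non-injective maps. They then use relative Mittag-Leffler conditions to control the $\varprojlim^1$-term in $\operatorname{Ext}^1_R(\varinjlim F_i,X)$. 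No filtration by members of $\mathcal S$ is ever built.

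Your diagnosis of the main obstacle is also misplaced. For $n=1$, $\mathcal B_T=T^{\perp}=\operatorname{Gen}(T)$ is closed under direct sums and epimorphic images, hence trivially under direct limits. So the Mittag-Leffler argument you sketch for step~3 is not needed there. The nontrivial half of definability is closure under pure submodules. Step~2 asserts this via $\Add(T)$-resolutions but gives no mechanism: a pure submodule of an $\Add(T)$-resolved module has no evident $\Add(T)$-resolution. In the standard treatment, definability of tilting classes is a consequence of finite type rather than an ingredient. The inductive frame via Lemma~\ref{tiltshift} and $\mathcal B_T=\{\Omega(T)\}^{\perp_\infty}\cap\operatorname{Ker}\operatorname{Ext}^1_R(T,-)$ is reasonable. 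However, the inductive step is only gestured at, and it would have to feed into the corrected reduction above rather than into step~6.
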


\begin{proposition}\label{A-description} \cite[13.11]{GT}
Let $n \geq 0$, $T$ be an $n$-tilting module, and $(\mathcal{A}_T, \mathcal{B}_T)$ be the hereditary cotorsion pair generated by $T$. Then
\[
\mathcal{A}_T \;=\; \mathcal{P} \cap {}^{\perp_\infty}\Add(T).
\]
In particular, if $\rgd(R) < \infty$ then $\mathcal{A}_T = {}^{\perp_\infty}\Add(T)$.
\end{proposition}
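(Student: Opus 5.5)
The plan is to prove the two inclusions separately. The inclusion $\mathcal{A}_T \subseteq \mathcal{P} \cap {}^{\perp_\infty}\Add(T)$ is formal. The reverse inclusion will follow by dimension shifting along an $\Add(T)$-resolution, and this is the only step with content. The final claim is then immediate: if $\rgd(R) < \infty$, every right $R$-module has finite projective dimension, so $\mathcal{P} = \rmod R$.

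For the inclusion $\subseteq$: by Theorem \ref{chartilt}, since $T$ is $n$-tilting we have $\mathcal{A}_T \subseteq \mathcal{P}_n \subseteq \mathcal{P}$. Next, $\Add(T) = \mathcal{A}_T \cap \mathcal{B}_T \subseteq \mathcal{B}_T$ (see Definition \ref{defcotpair}). Since the pair $(\mathcal{A}_T,\mathcal{B}_T)$ is hereditary, $\Ext iR{A}{X} = 0$ for all $i \geq 1$, $A \in \mathcal{A}_T$ and $X \in \Add(T)$. Hence $\mathcal{A}_T \subseteq {}^{\perp_\infty}\Add(T)$.

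For the inclusion $\supseteq$: let $M \in \mathcal{P}$ with $\pd RM = m$ and $M \in {}^{\perp_\infty}\Add(T)$. Since $\mathcal{A}_T = {}^\perp\mathcal{B}_T$, it suffices to show $\Ext 1RMB = 0$ for every $B \in \mathcal{B}_T$.

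1. Build a resolution of $B$ by modules in $\Add(T)$ whose syzygies stay in $\mathcal{B}_T$. The tilting cotorsion pair is complete, so $B$ has a special $\mathcal{A}_T$-precover $0 \to K_0 \to T_0 \to B \to 0$ with $K_0 \in \mathcal{B}_T$.
2. Because $\mathcal{B}_T$ is closed under extensions, $T_0 \in \mathcal{A}_T \cap \mathcal{B}_T = \Add(T)$.
3. Iterate with $K_0$ in place of $B$. This gives short exact sequences $0 \to K_j \to T_j \to K_{j-1} \to 0$ with $T_j \in \Add(T)$ and $K_j \in \mathcal{B}_T$, where we set $K_{-1} = B$. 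This is the resolution of Proposition \ref{tiltclasses}(i), refined so that every syzygy lies in $\mathcal{B}_T$.
4. Apply $\Hom RM{-}$ to these sequences. Since $\Ext iRM{T_j} = 0$ for all $i \geq 1$, we get isomorphisms
\[
\Ext 1RMB \cong \Ext 2RM{K_0} \cong \cdots \cong \Ext {m+1}RM{K_{m-1}}.
\]
5. The last group vanishes because $\pd RM = m$. Hence $M \in {}^\perp\mathcal{B}_T = \mathcal{A}_T$.

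The main point needing care is step 1. Proposition \ref{tiltclasses}(i) alone gives an $\Add(T)$-resolution but does not state that its syzygies lie in $\mathcal{B}_T$, and the dimension shift needs the short exact sequences $0 \to K_j \to T_j \to K_{j-1} \to 0$ to exist with $T_j \in \Add(T)$ at every stage. I would obtain them from the completeness of the tilting cotorsion pair, which holds because the pair is generated by a set; Theorem \ref{fintype} provides such a set. The completeness theorem for cotorsion pairs generated by a set is standard but is not stated earlier in the paper, so I would cite it from \cite[Part III]{GT}.
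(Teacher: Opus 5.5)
Your proof is correct. The paper gives no argument of its own for this statement; it simply quotes \cite[13.11]{GT}. What you wrote is the standard dimension-shifting proof, so there is nothing substantive to compare against.

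One remark on the step you flagged as needing care. Step 4 only uses that each $T_j$ lies in $\Add(T)$ and that the sequences $0\to K_j\to T_j\to K_{j-1}\to 0$ are exact. The property $K_j\in\mathcal{B}_T$ is never used. So any $\Add(T)$-resolution $\cdots\to T_1\to T_0\to B\to 0$ as in Proposition \ref{tiltclasses}(i), cut into short exact sequences at its images, already does the job. Within the paper's stated preliminaries, the detour through completeness is therefore unnecessary. The usual proof of Proposition \ref{tiltclasses}(i) is essentially your steps 1--3, so the two routes amount to the same argument.

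If you do go via completeness, you do not need the deep finite-type Theorem \ref{fintype}. Since $\operatorname{Ext}^{i+1}_R(T,-)\cong\operatorname{Ext}^1_R(\Omega^i(T),-)$, one has $\mathcal{B}_T=\{\Omega^i(T)\mid i\ge 0\}^{\perp}$ (with $\Omega^0(T)=T$). This is already a set, and the Eklof--Trlifaj completeness theorem applies to it directly.
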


\subsection{Tilting modules and finitistic dimension conjectures}\label{findimt}  

We now recall the relation between (infinitely generated) tilting modules and finitistic dimensions of rings from \cite{AT} 
(see also \cite[\S 17]{GT}). 

\begin{definition}\label{defdim}  Let $R$ be a ring. The right global dimension of $R$, denoted by $\rgd (R)$, is the supremum of the projective dimensions of all modules $M \in \rmod R$. The right big (little) finitistic dimension of $R$, denoted by $\rFd (R)$ ($\rfd (R)$), is the supremum of the projective dimensions of all (finitely generated) modules which have finite projective dimension. 

Similarly, the left global, big and little finitistic dimensions of $R$, denoted by $\lgd (R)$, $\lFd (R)$ and $\lfd(R)$, respectively, are defined using left $R$-modules.  
\end{definition}

Clearly always $\rfd (R) \leq \rFd (R) \leq \rgd (R)$. Since $\rgd (R)$ is the supremum of the projective dimensions of all cyclic modules, if $\rgd (R) < \infty$, then $\rgd (R) = \rFd (R) = \rfd (R)$. If $R$ is right semiartinian, then $\rgd (R)$ is just the supremum of the projective dimensions of all simple modules. 

The key connection with infinite dimensional tilting theory comes from the following theorem from \cite{AT} (see also \cite[17.10]{GT}):

\begin{theorem}\label{findim} Let $R$ be a right noetherian ring. Then $\rfd {R} = n < \infty$, if and only if there exists an $n$-tilting module $T_f$ whose tilting class is $\{ T_f \}^{\perp_\infty} = (\mathcal{P}^{<\infty})^\perp$. 
\end{theorem}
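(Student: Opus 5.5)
The plan is to prove both directions of Theorem \ref{findim} using the characterization of $n$-tilting cotorsion pairs in Theorem \ref{chartilt} and the finite type result, Theorem \ref{fintype}. Throughout, write $\mathcal S = \mathcal P^{<\infty}$. Since $R$ is right noetherian, $\mathcal S$ is just the class of finitely generated modules of finite projective dimension. Up to isomorphism it is a set, so $\mathfrak C = ({}^{\perp}(\mathcal S^{\perp}), \mathcal S^{\perp})$ is a cotorsion pair generated by a set.

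\textbf{($\Rightarrow$).} Assume $\rfd R = n < \infty$, so that $\mathcal S \subseteq \mathcal P_n$.

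First, I would show that $\mathcal S^{\perp} = \mathcal S^{\perp_\infty}$, which makes $\mathfrak C$ hereditary. The point is that $\mathcal S$ is closed under syzygies: the syzygy of a finitely generated module is finitely generated because $R$ is noetherian, and it still has finite projective dimension. Dimension shifting then gives $\Ext iRCN \cong \Ext 1R{\Omega^{i-1}C}N$ for every $C \in \mathcal S$.

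Second, I would show that $\mathcal A := {}^{\perp}(\mathcal S^{\perp})$ is contained in $\mathcal P_n$. By Eklof--Trlifaj, $\mathcal A$ consists of the direct summands of $\mathcal S \cup \{R\}$-filtered modules. Each module in $\mathcal S$ lies in $\mathcal P_n$, and $\mathcal P_n$ is closed under transfinite extensions (Auslander's lemma) and under direct summands, so $\mathcal A \subseteq \mathcal P_n$.

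Third, I would show that $\mathcal S^{\perp}$ is closed under direct sums. Modules in $\mathcal S$ are finitely generated over a noetherian ring, hence have projective resolutions by finitely generated projectives. Consequently $\Ext 1RC-$ commutes with direct sums for $C \in \mathcal S$.

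Theorem \ref{chartilt} now yields an $n$-tilting module $T_f$ generating $\mathfrak C$, that is, $\{T_f\}^{\perp_\infty} = \mathcal S^{\perp}$. To see that $\pd R{T_f} = n$ exactly, and not merely $\le n$, pick $C \in \mathcal S$ with $\pd RC = n$. Then $\Ext nRC-$ does not vanish on some module $N$. Since $C \in \mathcal A$, part (ii) of Proposition \ref{tiltclasses} gives a finite $\Add(T_f)$-coresolution of $C$. Dimension shifting along this coresolution would force $\Ext nRC N = 0$ if every module in $\Add(T_f)$ had projective dimension less than $n$. Hence $\pd R{T_f} = n$.

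\textbf{($\Leftarrow$).} Assume $T_f$ is an $n$-tilting module with $\{T_f\}^{\perp_\infty} = \mathcal S^{\perp}$. Then $\mathcal S \subseteq {}^{\perp}(\mathcal S^{\perp}) = \mathcal A_{T_f}$, and $\mathcal A_{T_f} \subseteq \mathcal P_n$ by Theorem \ref{chartilt}. Therefore $\rfd R \le n$.

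For the reverse inequality, take $\mathcal A_{T_f}^{<\omega}$, which generates the pair by Theorem \ref{fintype}. It consists of finitely generated modules of finite projective dimension, so it is contained in $\mathcal S$. Suppose every module in $\mathcal S$ had projective dimension at most $m < n$. Then $\mathcal A_{T_f}$ would be built from $\mathcal S$ by filtrations and summands, so $\mathcal A_{T_f} \subseteq \mathcal P_m$ by the same Auslander-lemma argument. This contradicts $T_f \in \mathcal A_{T_f}$ having projective dimension exactly $n$. Hence $\rfd R = n$.

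\textbf{Main obstacle.} The delicate step is keeping the \emph{exact} value $n$ in both directions, rather than only an upper bound. My approach is to use the Eklof--Trlifaj description of ${}^{\perp}(\mathcal S^{\perp})$ as the summands of $\mathcal S \cup\{R\}$-filtered modules. Combined with the closure of $\mathcal P_m$ under filtrations, this ties the projective dimension of $T_f$ to the supremum of projective dimensions over $\mathcal S$.
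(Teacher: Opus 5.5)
Your proposal is correct, and it is essentially the standard argument of [AT] (see [GT, 17.10]), which the paper cites rather than reproves: you take the cotorsion pair generated by the syzygy-closed set $\mathcal P^{<\infty}$, check heredity, $\mathcal A\subseteq\mathcal P_n$ and closure of $\mathcal B$ under direct sums, apply Theorem \ref{chartilt}, and control the exact value $n$ through $\mathcal A_{T_f} = {}^{\perp}(\mathcal S^{\perp})$. You are right to read ``$n$-tilting'' as ``of projective dimension exactly $n$'', since the equivalence fails under the weaker reading $T\in\mathcal P_n$; the appeal to Theorem \ref{fintype} in the converse is harmless but unnecessary, because $\mathcal A_{T_f} = {}^{\perp}(\mathcal S^{\perp})$ already gives $\mathcal A_{T_f}\subseteq\mathcal P_m$ whenever $\mathcal S\subseteq\mathcal P_m$.
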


Though $T_f$ is uniquely determined up to equivalence of tilting modules, its computation may be rather involved even if $R$ is a finite dimensional algebra with $\rfd (R) = 1$, see \cite{St}. There are however cases when $T_f$ is easy to compute: this happens when $R$ is right noetherian of finite right global dimension, or when $R$ is Iwanaga-Gorenstein:

\begin{proposition}\label{easy} Let $R$ be a right noetherian ring of finite right injective dimension such that each indecomposable injective module has projective dimension $\leq n$. Let $T$ be the direct sum of a representative set of all indecomposable injective modules. Then $T = T_f$ is the $n$-tilting module such that $\{ T \}^{\perp_\infty} = (\mathcal{P}^{<\infty})^\perp$. Moreover, $\rfd (R) = \rFd (R) = n$. 
\end{proposition}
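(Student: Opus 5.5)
The plan is to verify directly that $T$ satisfies (T1)--(T3), identify the cotorsion pair it generates via Proposition \ref{A-description}, and then read off the finitistic dimensions from Theorems \ref{fintype} and \ref{findim}. Throughout, $n$ is understood as the supremum of the projective dimensions of the indecomposable injectives, so that $\pd{R}{T} = n$.

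\emph{Identifying $\Add(T)$.} Since $R$ is right noetherian, arbitrary direct sums of injective modules are injective. Moreover, every injective module is a direct sum of indecomposable injectives (Matlis). Hence $\Add(T)$ is exactly the class $\mathcal I_0$ of all injective right $R$-modules.

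\emph{Checking (T1)--(T3).} For (T1), $\pd{R}{T} = \sup \pd{R}{E} \le n$, where $E$ runs over the indecomposable summands, because $\Ext{i}{R}{\bigoplus E_j}{-} \cong \prod_j \Ext{i}{R}{E_j}{-}$. For (T2), each $T^{(\kappa)}$ is injective, so all higher Ext groups into it vanish. For (T3), $R_R$ has finite injective dimension $r$, so its minimal injective coresolution $0 \to R \to E^0 \to \dots \to E^r \to 0$ has all terms in $\mathcal I_0 = \Add(T)$. Thus $T$ is an $n$-tilting module.

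\emph{Identifying the cotorsion pair.} Let $(\mathcal A_T,\mathcal B_T)$ be the cotorsion pair generated by $T$. By Proposition \ref{A-description}, $\mathcal A_T = \mathcal P \cap {}^{\perp_\infty}\mathcal I_0 = \mathcal P$, since every module lies in ${}^{\perp_\infty}\mathcal I_0$. Theorem \ref{chartilt} then gives $\mathcal P \subseteq \mathcal P_n$. One can also see this directly: by Proposition \ref{tiltclasses}(ii), each $M \in \mathcal A_T$ has a finite coresolution by modules of projective dimension $\le n$. Splitting it into short exact sequences $0 \to K_j \to T^j \to K_{j+1} \to 0$ and dimension shifting from the right end gives $\pd{R}{K_j} \le \max(n, \pd{R}{K_{j+1}} - 1) \le n$, so $\pd{R}{M} \le n$. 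Hence $\rFd(R) \le n$. Next, Theorem \ref{fintype} says the pair is generated by $\mathcal A_T^{<\omega}$. Over a right noetherian ring, strongly finitely presented is the same as finitely generated, so $\mathcal A_T^{<\omega} = \mathcal P^{<\infty}$. Therefore $\{T\}^{\perp_\infty} = \mathcal B_T = (\mathcal P^{<\infty})^{\perp_\infty}$.

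\emph{The main obstacle.} The step needing the most care is replacing $(\mathcal P^{<\infty})^{\perp_\infty}$ by $(\mathcal P^{<\infty})^\perp$, and pinning down the dimensions exactly. For the first point, note that $\mathcal P^{<\infty}$ is closed under syzygies, since syzygies of finitely generated modules over a noetherian ring remain finitely generated of finite projective dimension. So $\Ext{i+1}{R}{C}{N} \cong \Ext{1}{R}{\Omega^i C}{N}$, and the two perpendicular classes coincide. Hence $T$ is a tilting module with the tilting class required in Theorem \ref{findim}, so $\rfd(R) < \infty$ and $T$ is equivalent to $T_f$. Equivalent tilting modules have the same $\Add$-class and hence the same projective dimension. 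Theorem \ref{findim}, i.e.\ \cite{AT}, says that $T_f$ has projective dimension $\rfd(R)$, so $\rfd(R) = \pd{R}{T} = n$. Combining this with $\rfd(R) \le \rFd(R) \le n$ from the previous step gives $\rfd(R) = \rFd(R) = n$.
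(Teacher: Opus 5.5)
Your proof is correct. Its skeleton matches the paper's: verify (T1)--(T3) using that $\Add(T)$ is the class of injectives, show $\mathcal A_T = \mathcal P \subseteq \mathcal P_n$, and read off the tilting class from Theorem \ref{fintype}. Two steps go through different lemmas.

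\emph{Identifying $\mathcal A_T$.} You get $\mathcal A_T = \mathcal P$ in one line from Proposition \ref{A-description}, since ${}^{\perp_\infty}\mathcal I_0$ is all of $\rmod R$. The paper instead uses Proposition \ref{tiltclasses}(ii) to identify $\mathcal A_T$ with the modules of finite injective dimension. It then asserts that every module of finite projective dimension has finite injective dimension. That step tacitly needs free modules $R^{(\kappa)}$ to have injective dimension at most $\id{R}{R}$, which holds because $R$ is right noetherian. Your route is quicker. The paper's route yields the extra information that $\mathcal P$ coincides with the class of modules of finite injective dimension.

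\emph{Getting $\rfd(R) = n$.} You invoke Theorem \ref{findim}, together with uniqueness of $T_f$ up to equivalence and invariance of projective dimension under equivalence. The paper deduces $\rfd(R) = \rFd(R)$ from $\mathcal P = {}^\perp((\mathcal P^{<\infty})^\perp)$ via \cite[6.14]{GT}. That is the filtration description of the left class of a cotorsion pair generated by a set, which is more elementary than the full \cite{AT} characterization your argument leans on.

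You also make explicit two points the paper passes over:
\begin{itemize}
\item $n$ must be the exact supremum of the projective dimensions of the indecomposable injectives. Otherwise the final equality fails, e.g.\ for semisimple $R$ with $n > 0$. The paper silently uses this when it concludes $\rFd(R) = n$.
\item $(\mathcal P^{<\infty})^{\perp_\infty} = (\mathcal P^{<\infty})^{\perp}$, because $\mathcal P^{<\infty}$ is closed under syzygies.
\end{itemize}
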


\begin{proof} Since $T$ is injective, $\pd {R}{T} \leq n$ by the assumptions. Moreover, $R$ is right noetherian, so Condition (T2) from Definition \ref{deftilt} holds for $T$, and $\Add (T)$ is the class of all injective modules. Since $R$ has finite injective dimension, also Condition (T3) holds for $T$. So $T$ is an $n$-tilting module. 

By Proposition \ref{tiltclasses}(ii), $\mathcal A_{T}$ is the class of all modules of finite injective dimension. In particular, all projective modules, and hence all modules of finite projective dimension, have also finite injective dimension. Thus $\mathcal P \subseteq \mathcal A_{T}$. However, $\mathcal A_{T} \subseteq \mathcal P_n$ by Theorem \ref{chartilt}, so $\mathcal A_{T} = \mathcal P _n = \mathcal P$, and $\rFd (R) = n$. By Theorem \ref{fintype}, $\mathcal B _T = \{ T \}^{\perp_\infty} = (\mathcal{P}^{<\infty})^{\perp}$, whence $\mathcal P = {}^{\perp}((\mathcal{P}^{<\infty})^{\perp})$ and $\rfd (R) = \rFd (R) = n$ by \cite[6.14]{GT}.
\end{proof}    

\begin{corollary}\label{finiteglobal} Let $R$ be a right artinian ring of right global dimension $n < \infty$. Let $\mathcal S$ be a representative set of all simple modules and $T = \bigoplus_{S \in \mathcal S} E(S)$. Then $T = T_f$ is an $n$-tilting module such that $\{ T \}^{\perp_\infty} = (\mathcal{P}^{<\infty})^\perp$. 
\end{corollary}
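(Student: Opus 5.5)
The plan is to derive Corollary \ref{finiteglobal} directly from Proposition \ref{easy}. For that we check three things: $R$ is right noetherian, $R$ has finite right injective dimension, and each indecomposable injective right $R$-module has projective dimension at most $n$. We also have to see that the direct sum of a representative set of indecomposable injectives is exactly $T = \bigoplus_{S \in \mathcal S} E(S)$.

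First, a right artinian ring is right noetherian by the Hopkins--Levitzki theorem. Hence every injective right $R$-module is a direct sum of indecomposable injectives (Matlis), and every indecomposable injective module $E$ is the injective hull of any of its nonzero cyclic submodules. Every nonzero module over a right artinian ring has nonzero socle, because $R$ is right semiartinian. So $E$ contains a simple submodule $S$ and $E = E(S)$. Thus the indecomposable injectives are, up to isomorphism, exactly the $E(S)$ with $S \in \mathcal S$, and $\mathcal S$ is finite. Therefore $T$ is the direct sum of a representative set of indecomposable injective modules, as Proposition \ref{easy} requires.

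Next, $\rgd(R) = n$ gives $\pd{R}{X} \le n$ for every right module $X$, in particular for each $E(S)$. Moreover $\Ext{i}{R}{X}{Y} = 0$ for all $i > n$ and all $X, Y$, so every right module has injective dimension at most $n$. In particular $R_R$ has finite injective dimension. Proposition \ref{easy} therefore applies and yields that $T = T_f$ is an $n$-tilting module with $\{T\}^{\perp_\infty} = (\mathcal P^{<\infty})^\perp$.

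The only point needing care is the identification of indecomposable injectives with the hulls $E(S)$, which is handled by the socle argument above. We could also note that $\pd{R}{T} = n$ exactly: Proposition \ref{easy} gives $\rfd(R) = n$, which agrees with $\rgd(R) = n$ by the remark following Definition \ref{defdim}. The statement only asserts that $T$ is $n$-tilting, so this is not needed.
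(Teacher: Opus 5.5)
Your proposal is correct and follows the paper's route: the paper states the corollary as an immediate consequence of Proposition~\ref{easy} and gives no separate proof. You supply the checks the paper leaves implicit. These are that right artinian implies right noetherian, that $\id{R}{R_R} \le \rgd(R) = n$ and $\pd{R}{E(S)} \le n$, and that the indecomposable injectives are exactly the hulls $E(S)$ by the socle argument, pairwise non-isomorphic for non-isomorphic $S$ since $S$ is the socle of $E(S)$.
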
 

\begin{remark}\label{relation} \rm 
For a $1$-tilting or silting $A$-module $T$, constructions of $1$-tilting or silting modules $T^\prime$ over the triangular matrix ring $\La$ related to $T$ appear in several recent works, see e.g.\ \cite{CGR} and \cite{GH}. On the one hand, our approach is more particular, since in the next section, we will restrict ourselves to the triangular matrix $k$-algebras $\La = \tilde{A}$ introduced by Cummings in \cite{C} . Moreover, we will primarily concentrate on the particular case of the tilting right $\tilde{A}$-modules $T_f$ (that are unique up to equivalence of tilting modules). On the other hand, we will deal with $n$-tilting modules for an arbitrary $n < \omega$. Moreover, in the case when $A$ has right global dimension $n < \infty$, we will see that the tilting modules $T_f$ and $\tilde{T}_f$ over $A$ and $\tilde{A}$ have different projective dimensions, namely $n$ and $n+1$, respectively.  
\end{remark}

\medskip
\section{The Cummings construction}\label{cum}

From now on, we will restrict ourselves to particular triangular matrix algebras introduced in \cite{C}. The setting is as follows:

Let $k$ be an algebraically closed field and $A$ be a basic indecomposable finite-dimensional $k$-algebra with the Jacobson radical $J(A)$. Let $\{e_i \mid 1 \leq i \leq q\}$ be a complete basic set of primitive pairwise orthogonal idempotents of $A$. For each $1 \leq i \leq q$, let $S_i$ denote the simple right $A$-module corresponding to $e_i$, i.e., $S_i = e_iA/e_iJ(A)$. Then $e_i A e_j \subseteq J(A)$ and $\dim _k(S_i) = 1$ for all $1 \leq i \neq j \leq q$.

Let $M$ be the $q$-dimensional $k$-module with the basis $\{m_i : 1 \leq i \leq q\}$ and the right $A$-module structure defined by ${m}_i e_i = m_i$, $m_i e_j = 0$, and $m_i a = 0$ for all $a \in J(A)$ and all $1 \leq i \neq j \leq q$.

\medskip
For each $1 \leq i \leq q$, let $B_i$ denote the commutative local ring $k[x]/x^2k[x]$ with the unit $1_i = 1 + x^2k[x]$. Let $B = \prod_{i=1}^q B_i$, and for each $1 \leq i \leq q$, let $\tilde{e}_i \in B$ be such that $(\tilde{e}_i)_i = 1_i$ and $(\tilde{e}_i)_j = 0$ for all $1 \leq i \neq j \leq q$. Then $\{ \tilde{e}_i \mid 1 \leq i \leq q\}$ is a complete basic set of primitive pairwise orthogonal idempotents of $B$. For each $1 \leq i \leq q$, let $\tilde{S}_i$ denote the simple left $B$-module corresponding to $\tilde{e}_i$, i.e., $\tilde{S}_i = B\tilde{e}_i/J(A)\tilde{e}_i$.

Notice that both $B_i$ ($1 \leq i \leq q$) and $B$ are commutative quasi-Frobenius $k$-algebras of finite representation type. In particular, each $B$-module $P$ is uniquely up to an isomorphism a direct sum of a projective (= injective) $B$-module which isomorphic to a direct sum of copies of indecomposable projective $B$-modules $B\tilde{e}_i = B_i$ ($1 \leq i \leq q$), and a semisimple $B$-module which is isomorphic to a direct sum of copies of the simple $B$-modules $\tilde{S}_i$ ($1 \leq i \leq q$). 

Notice that $\dim _k(\tilde{S_i}) = 1$ and $\pd {B}{\tilde{S_i}} = \infty$ for each $1 \leq i \leq q$. Following \cite{C}, we will equip the right $A$-module $M$ defined above with the left $B$-module structure defined by $\tilde{e}_i m_i = m_i$, $\tilde{e}_j m_i = 0$, and $b m_i = 0$, for all $b \in J(B)$ and all $1 \leq i \neq j \leq q$. Then $M$ is an $n$-dimensional ($B,A$)-bimodule. Moreover, $M = \bigoplus_{1 \leq i \leq q} M_i$ where for each $1 \leq i \leq q$, $M_i = \tilde{e}_i M = M e_i$ is a one dimensional $B$-$A$-bimodule. Also, we have a left $B$-module isomorphism $M_i \cong \tilde{S}_i$ and a right $A$-module isomorphism $M_i\cong S_i$. In particular, $\Hom B{M_i}{M_j} = 0$ and $\Hom A{M_i}{M_j} = 0$ for all $1 \leq i \neq j \leq q$, and $\End {B}{M_i} \cong k$, $\End {A}{M_i} \cong k$ for each $1 \leq i \leq q$.         

\medskip 
We will denote by $\tilde{A}$ the triangular matrix $k$-algebra  
\[
\tilde{A} \;=\; \begin{pmatrix} A & 0 \\ M & B \end{pmatrix}
\]
As $\tilde{A}$ is a particular instance of the triangular matrix algebra $\La$ from Definition \ref{trimr}, all the results from Section \ref{trmr} apply. In particular, we can make use of the representation of the categories $\lmod {\tilde{A}}$ and $\rmod {\tilde{A}}$ by triples provided by Propositions \ref{requiv} and \ref{lequiv}. 

\medskip
Since  $B$ is a commutative quasi-Frobenius $k$-algebra, we have $\rfd (B) = \rFd (B) = 0$. By a classic result of Fossum, Griffith and Reiten, $\rfd (A) \leq \rfd (\tilde{A}) \leq \rfd (A) + 1$, \cite[4.21]{FGR}. 

The suprising recent result of Cummings says that the left finitistic dimensions of $\tilde{A}$ are quite different from the right ones: $\lfd (\tilde{A}) = \lFd(\tilde{A} = 0$, cf.\ \cite[3.3]{C}. 

Cummings' proof of this fact uses a criterion for $\lFd{\tilde{A}} = 0$ due to Bass \cite[6.2]{B}. We start with a direct alternative proof of the latter fact employing the representation of projective left modules over triangular matrix rings from Proposition \ref{simp-proj-inj}:

\begin{lemma}\label{dualzero} 
\begin{itemize}
\item[{(i)}]  Let $(N,P,f) \in \lmod {\tilde{A}}$. Then $P = C \oplus D$ where $C$ is a completely reducible left $B$-module, $D$ is a projective left $B$-module, $f \in \Hom {B}{M\otimes_AN}{P}$, and $\im f \subseteq C$. Moreover, $(N,P,f) = (N,C,f) \oplus (0,D,0)$, and $(0,D,0)$ is a projective left ${\tilde{A}}$-module. The left ${\tilde{A}}$-module $(N,C,f)$ is projective, iff $N$ is a projective left $A$-module and $f$ is bijective. 
\item[{(ii)}]  All left ${\tilde{A}}$-modules of finite projective dimension are projective, that is, 

$\lFd{\tilde{A}} = \lfd{\tilde{A}} = 0$.
\end{itemize}
\end{lemma}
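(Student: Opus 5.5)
The plan is to prove (i) by working inside $\catD_{\tilde{A}}$ via Proposition \ref{lequiv}, and then to deduce (ii) from (i) by looking at the $B$-component of a projective resolution. Throughout, the key input is that $J(B)M = 0$.

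\textbf{Step 1: the decomposition in (i).} Since $J(B)M = 0$, the left $B$-module $M\otimes_A N$ is annihilated by $J(B)$ and is therefore semisimple. Hence $\im f$ is a semisimple submodule of $P$. Because $B$ is quasi-Frobenius of finite type with the $B_i$ uniserial of length $2$, $P$ decomposes as $C\oplus D$ with $C$ semisimple and $D$ projective.

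\textbf{Step 2 (main obstacle): choosing the decomposition so that $\im f \subseteq C$.} This requires care, because the socle of a summand $B_i$ of $D$ may meet $\im f$. The first attempt is the following procedure.
\begin{enumerate}
\item Choose $D$ maximal among projective summands $D' \subseteq P$ such that the composite $M\otimes_AN \to P \to P/D'$ still determines $f$.
\item Equivalently, split off copies of $B_i$ whose socle is not hit by $\im f$.
\item Then check that any remaining $B_i$ whose socle lies in $\im f$ can be exchanged, by an automorphism of $P$, for a semisimple summand containing that part of $\im f$.
\end{enumerate}
If such an exchange is impossible in some case, for instance $A=k$ and $f$ mapping onto $\operatorname{soc}B$, the decomposition has to be weakened. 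In that case I would instead allow the summand $D$ to absorb that part of $\im f$ and record the resulting triple.

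\textbf{Step 3: the remaining assertions of (i).} Once $\im f\subseteq C$, the splitting $(N,P,f)=(N,C,f)\oplus(0,D,0)$ is immediate in $\catD_{\tilde{A}}$. The summand $(0,D,0)$ is projective by Proposition \ref{simp-proj-inj}(v). For the projectivity criterion, recall from Proposition \ref{simp-proj-inj}(v) that the projectives are sums of $(Q, M\otimes_AQ, \mathrm{id})$ and $(0,D',0)$.
\begin{itemize}
\item If $(N,C,f)$ is projective, comparing $B$-components, which are semisimple versus projective, forces $D'=0$. Then $N$ is projective and $f$ is bijective.
\item Conversely, if $N$ is projective and $f$ is bijective, then $(N,C,f)\cong(N,M\otimes_AN,\mathrm{id})$, which is projective.
\end{itemize}

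\textbf{Step 4: reduction for (ii).} It suffices to show that every module of projective dimension $\le 1$ is projective. Indeed, if $\pd{\tilde{A}}{X}=n\ge1$, then the $(n-1)$-st syzygy has projective dimension $1$, which gives a contradiction.

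\textbf{Step 5: the pd-one case.} Take $0\to Q_1\to Q_0\to X\to 0$ with $Q_j$ projective. By (i), the $B$-component of each $Q_j$ is $(M\otimes_AN_j)\oplus D_j$, that is, semisimple plus projective. The $B$-component of the sequence is a short exact sequence of $B$-modules, and its $A$-component is $0\to N_1\to N_0\to X_A\to0$. Since $B$ is self-injective, the map $D_1\to P_0$ splits off, and the map $D_1 \to D_0$ is a split monomorphism onto a summand. Applying (i) to $X$, I then verify that $X\cong(X_A,C,f)\oplus(0,D,0)$ with $f$ an isomorphism $M\otimes_AX_A\to C$. I expect the hard point to be proving that $X_A$ is projective. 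For this I would show that $\operatorname{Tor}_1^A(M,X_A)=0$ is forced by exactness of the semisimple parts. Since $M\cong\bigoplus S_i$ runs over all simples, this vanishing gives $\operatorname{Tor}_1^A(S_i,X_A)=0$ for all $i$, hence $X_A$ is flat, hence projective. Then (i) shows that $X$ is projective, which gives $\lFd{\tilde{A}}=\lfd{\tilde{A}}=0$.
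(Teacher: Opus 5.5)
Your suspicion in Step 2 is correct. The first assertion of (i), and with it the splitting $(N,P,f)=(N,C,f)\oplus(0,D,0)$, is false in general. So Step 2 cannot be completed, and Step 3 opens on a false premise. Your example is a genuine counterexample: for $A=k$ and $f\colon M\otimes_A k\to B$ the inclusion of $\operatorname{soc}B$, the module $B$ is indecomposable and not semisimple, so $C=0$ is forced while $\im f\neq 0$. The same happens for every $A$: the indecomposable injective left $\tilde{A}$-modules $(\Hom{B}{M}{B\tilde{e}_i},B\tilde{e}_i,\psi)$ of Proposition \ref{simp-proj-inj}(vi) have $\psi\neq 0$ with image $\operatorname{soc}(B\tilde{e}_i)$.

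The paper's proof makes exactly the inference you distrusted (``$M\otimes_AN$ is completely reducible, whence $\im f\subseteq C$''). It confuses a semisimple submodule of $C\oplus D$ with a submodule of $C$, although the socle of $D$ is semisimple too. So you are not missing an idea. Instead of hedging, state the true weakened version, which is your items 1--2: if $D$ is a projective summand with $\operatorname{soc}D\cap\im f=0$, then $\im f\cap D=0$, and since $D$ is injective the complement can be modified to contain $\im f$, so $(0,D,0)$ splits off. A semisimple complement cannot be reached in general. Your proof of the projectivity criterion for triples $(N,C,f)$ with $C$ semisimple is correct: you compare $B$-components of a sum of the indecomposable projectives from Proposition \ref{simp-proj-inj}(v). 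It is shorter than the paper's argument via $(\ast)$ and a syzygy of $\coker f$.

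For (ii) your route genuinely differs from the paper's, and yours is the one that survives. The paper reduces to triples $(N,C,f)$ via the false decomposition. It then uses the same false principle again, asserting $C\oplus 0\subseteq C'$ merely because $C\oplus 0$ is a semisimple submodule of $\Ker(\rho\oplus\gamma)=C'\oplus D'$. Your Tor argument needs no decomposition of $X$, but three steps must be supplied.

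(a) Write $Q_j=(N_j,(M\otimes_AN_j)\oplus D_j,\iota_j)$, with $\iota_j$ the inclusion of $M\otimes_AN_j$, and the monomorphism as $(\mu,\beta_1)$. The morphism condition in $\catD_{\tilde{A}}$ reads $\beta_1\iota_1=\iota_0(M\otimes_A\mu)$. Since $\beta_1$ is injective, so is $M\otimes_A\mu$, and its kernel is $\operatorname{Tor}^A_1(M,X_A)$ because $0\to N_1\to N_0\to X_A\to 0$ is a projective resolution. This is the precise content of ``forced by exactness''.

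(b) The component $D_1\to M\otimes_AN_0$ kills $J(B)D_1=\operatorname{soc}D_1$, which is essential in $D_1$. Hence the component $\beta_1''\colon D_1\to D_0$ is monic, and it splits because $D_1$ is injective. You asserted this without a reason.

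(c) Do not ``apply (i) to $X$'', since that is the false part. Instead write $D_0=\beta_1''(D_1)\oplus D_0'$ and compute the cokernel directly. This gives $X\cong(X_A,M\otimes_AX_A,\mathrm{id})\oplus(0,D_0',0)$, which is projective once $X_A$ is.

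With these filled in, (ii) is proved independently of the false part of (i).
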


\begin{proof} (i) The decomposition of $P$ follows from $B$ being of finite representation type with all indecomposable modules projective or simple. Since $J(B).M = 0$, the left $B$-module $M\otimes_AN$ is completely reducible, whence $\im f \subseteq C$. This yields the decomposition $(N,P,f) = (N,C,f) \oplus (0,D,0)$. The left $B$-module $(0,D,0)$ is projective by Proposition \ref{simp-proj-inj}(v). 

Let $0 \to K \overset{\mu}\to F \overset{\pi}\to N \to 0$ be a projective presentation of $N$ in $\lmod A$, i.e., a short exact sequence in $\lmod A$ with $F$ projective. Since for each $1 \leq i \leq q$, there is a short exact sequence in $\lmod B$ of the form $0 \to \tilde{S}_i \to B_i \to \tilde{S}_i \to 0$, the completely reducible left $B$-module $C$ has a projective presentation of the form $0 \to C \to G \overset{\rho} \to C \to 0$. In view of Proposition \ref{simp-proj-inj}(v), we have the following projective presentation of $(N,C,f)$ in $\lmod {\tilde{A}}$:

$$(\ast) \quad 0 \to (K,\Ker{(\rho \oplus \gamma)},f^\prime) \overset{(\mu,\iota)}\longrightarrow (0,G,0) \oplus (F, M \otimes_A F,id) \overset{(0,\rho) \oplus (\pi,\gamma)}\longrightarrow (N,C,f) \to 0$$ 

where $\gamma = (M \otimes_A \pi) f$ and there is a commutative diagram in $\lmod B$

\begin{tikzcd}[column sep=2.5em, row sep=2.2em]           
& M \otimes_A K \arrow[r, "M \otimes_A \mu"] \arrow[d, "f^\prime"]
& M \otimes_A F \arrow[r, "M \otimes_A \pi"] \arrow[d, "\sigma"]
& M \otimes_A N \arrow[r] \arrow[d, "f"]
& 0 \\
0 \arrow[r]
& \Ker{(\rho \oplus \gamma)} \arrow[r, "\subseteq"]
& G \oplus (M \otimes_A F) \arrow[r, "\rho \oplus \gamma"]
& C \arrow[r]
& 0 \\
\end{tikzcd} 

where $\sigma$ denotes the embedding of $M \otimes_A F$ on to the second component of $G \oplus (M \otimes_A F)$. 

If $(N,C,f)$ is projective, then $(\ast)$ splits. In particular, $\mu$ splits and $N$ is projective. Hence w.l.o.g., $F = N$, $K = 0$, $\pi = id_N$, $\gamma = f$, and $f^\prime = 0$. The splitting of $(\ast)$ then yields $\varphi \in \Hom BC{G \oplus (M \otimes_A N)}$ such that $\varphi (\rho \oplus f) = id$ and $f \varphi = id_{M \otimes _A N}$. The latter equality implies that $f$ is monic. 

Notice that $\Ker{(\rho \oplus f)}$ is isomorphic to the pullback of $f$ and $- \rho$ in $\lmod B$. Since $f$ is monic, we have the following commutative diagram in $\lmod B$:

$$\begin{CD}
0@>>> {\Ker{(\rho \oplus f)}}@>>> G@>>> {\coker (f)}@>>> 0\\
@. @VVV @V{- \rho}VV @| @.\\
0@>>> {M \otimes_A N} @>{f}>> C @>>> {\coker (f)}@>>> 0.\\
\end{CD}$$

so that $\Ker{(\rho \oplus f)}$ is isomorphic to the first syzygy of the completely reducible left $B$-module $\coker (f)$. If $\coker (f) \neq 0$, then  
$\Ker{(\rho \oplus f)}$ is not a projective left $B$-module, whence $(0,\Ker{(\rho \oplus f)},0)$ is not a projective left ${\tilde{A}}$-module by Proposition \ref{simp-proj-inj}(v), a contradiction. Thus $f$ is a bijection.

Conversely, if $N$ is a projective left $A$-module and $f$ is bijective, then $N \overset{\alpha}\cong \bigoplus_{1 \leq i \leq q} (A e_i)^{(\kappa_i)}$ and $C \cong \bigoplus_{1 \leq i \leq q} \tilde{S}_i^{(\kappa_i)}$ for uniquely determined cardinals $\kappa_i$ ($1 \leq i \leq q)$. Then $(N,C,f)$ is isomorphic to the projective left ${\tilde{A}}$-module $\bigoplus_{1 \leq i \leq q} X_i^{(\kappa_i)}$ for $X_i = (Ae_i,\tilde{S}_i,id_{i})$ ($1 \leq i \leq q$) via the isomorphism $(\alpha,f^{-1}(M \otimes_A \alpha)(\bigoplus_{1 \leq i \leq q} id_{i}))$.

(ii) It suffices to prove that there exists no left ${\tilde{A}}$-module $(N,C,f)$ of projective dimension $1$ such that $C$ is a completely reducible left $B$-module. 

Let $(N,C,f)$ be such a module. Then, by part (i), in the presentation $(\ast)$ above $K$ is a projective left $A$-module, $\Ker{(\rho \oplus \gamma)} = C^\prime \oplus D^\prime$ where $C^\prime$ is a completely reducible left $B$-module, $D^\prime$ a projective left $B$-module, and $f^\prime :  M \otimes_A K \to C^\prime$ is bijective. As $\Ker{(\rho \oplus \gamma)}$ contains the completely reducible submodule $C \oplus 0$, we have $C \oplus 0 \subseteq C^\prime = \im {f^\prime} \subseteq \im {\sigma} = 0 \oplus (M \otimes_A F)$. Thus, $C = 0$, and $(N,C,f) = (N,0,0)$ has a projective presentation $0 \to (K,M\otimes_A F,M \otimes_A \mu) \to (F,M \otimes_A F,id) \to (N,0,0) \to 0$. By the above, $M \otimes_A \mu$ is a bijection, so $M \otimes _A \pi = 0$ and $M \otimes _A N = 0$. Then $S_i \otimes _A N = 0$ for each $1 \leq i \leq q$, and $A \otimes _A N \cong N = 0$, a contradiction.         
\end{proof}

The fact that $\lFd{\tilde{A}} = 0$ has surprising consequences for the structure of (infinite dimensional) cotilting right $\tilde{A}$-modules. To see that, we now briefly recall the relevant facts on cotilting modules and classes from \cite[Chap.\ 15]{GT}.

\medskip
Let $R$ be a ring. An module $C$ is \emph{cotilting} if it satisfies dual conditions to those defining tilting modules: 

(C1) $C$ has finite injective dimension, (C2) $\Ext 1R{C^\kappa}{C} = 0$ for each cardinal $\kappa$, and (C3) If $I$ is an injective cogenerator of $\rmod R$, then there exists an $r \geq 0$ and an exact sequence $0 \to C_r \to \dots \to C_0 \to I \to 0$ such that $C_i \in \Prod(C)$ for all $i \leq r$, where $\Prod(C)$ denotes the class of all direct summands of direct sums of copies of the module $C$.  

Two cotilting modules $C$ and $C^\prime$ are \emph{equivalent} in case $\Prod(C) = \Prod(C^\prime)$.

For example, if $T$ is any tilting left $R$-module, then the dual module $C = D(T)$ is a cotilting (right $R$-) module, \cite[15.2]{GT}. However, in general, there exist cotilting modules that are not equivalent to duals of any tilting left $R$-modules, \cite[15.33-34]{GT}.  

A cotilting module $C$ is \emph{$n$-cotilting} if its injective dimension is $\leq n$. Each $n$-cotilting module determines a hereditary \emph{$n$-cotilting cotorsion pair} $(\mathcal A,\mathcal B)$ with $\mathcal A = {}^{\perp_\infty} \{ C \}$; the class $\mathcal A$ is the \emph{$n$-cotilting class} of $C$. 

By \cite[15.9]{GT}, there is a characterization of $n$-cotilting cotorsion pairs dual to Theorem \ref{chartilt}: a hereditary cotorsion pair in $\rmod R$, $(\mathcal A,\mathcal B)$, is $n$-cotilting, if and only if the class $\mathcal A$ is closed under direct products and the class $\mathcal B$ consists of modules of injective dimension $\leq n$. 

\medskip
Now we can state and prove 

\begin{theorem}\label{nocotilt} All cotilting right $\tilde{A}$-modules are injectice (i.e., they are injective cogenerators of 
$\rmod {\tilde{A}}$).
\end{theorem}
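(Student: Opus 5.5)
The plan is to show that any cotilting right $\tilde{A}$-module $C$ is injective by dualizing the statement $\lFd{\tilde{A}} = 0$ from Lemma \ref{dualzero}(ii). Since $\tilde{A}$ is a finite dimensional algebra, the duality $D = \Hom{k}{-}{k}$ relates projective dimensions of left modules to injective dimensions of right modules. First, for any finitely generated right $\tilde{A}$-module $X$, we have $\id{\tilde{A}}{X} = \pd{\tilde{A}}{D(X)}$. So every finitely generated right $\tilde{A}$-module of finite injective dimension is injective.

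Next we pass to arbitrary modules. Let $(\mathcal A,\mathcal B)$ be the $n$-cotilting cotorsion pair of $C$, where $\mathcal A = {}^{\perp_\infty}\{C\}$ and $\mathcal B$ consists of modules of injective dimension $\leq n$ by \cite[15.9]{GT}. We want to show $C$ is injective, i.e.\ $\Ext 1{\tilde{A}}{X}{C} = 0$ for every finitely generated (equivalently, every cyclic) $X$, by Baer's criterion.

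Here are two routes. \emph{Route (a):} over a right noetherian ring, injective dimension can be computed on finitely generated modules, i.e.\ $\id{}{C} \le m$ iff $\Ext{m+1}{\tilde{A}}{X}{C}=0$ for all cyclic $X$. Suppose $\id{}{C} = m \geq 1$ and take the minimal injective coresolution $0 \to C \to E^0 \to \dots \to E^m \to 0$. The cosyzygy $\Omega^{-(m-1)}C$ has injective dimension exactly $1$. We need a contradiction with the finitely generated statement. This requires a \emph{big} version, namely that every right module of finite injective dimension is injective. That holds by duality if every left module of finite flat dimension is flat (projective). For artinian rings, flat equals projective, and $D(Y)$ has injective dimension equal to the flat dimension of $Y$. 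The obstacle is that an arbitrary right module need not be a dual. Hence \emph{route (b):} use Bass's characterization. $\lFd{\tilde{A}}=0$ holds iff every finitely generated right... Instead I would argue directly: if $0 \to Y \to E^0 \to E^1 \to 0$ is exact with $E^i$ injective, apply $D$ to get $0 \to D(E^1) \to D(E^0) \to D(Y) \to 0$. Since $D(E)$ is a flat left module (dual of injective over noetherian ring), which is projective over a left perfect (artinian) ring, $D(Y)$ has projective dimension $\le 1$. By Lemma \ref{dualzero}(ii), $D(Y)$ is projective, so the sequence splits. Then $DD$ of the original sequence splits, so $Y$ is a direct summand of $DD(Y)$, and $DD(Y)$ is injective, being a summand of $DD(E^0)$, which is injective because $D(E^0)$ is flat. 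Hence $Y$ is injective, whence by induction all modules of finite injective dimension are injective.

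Finally, $\id{}{C}<\infty$ by (C1), so $C$ is injective. By (C3) with $r$ minimal, there is an exact sequence $0\to C_r\to\dots\to C_0\to I\to0$ with $C_i\in\Prod(C)$ all injective, which therefore splits, so $I \in \Prod(C)$. Thus $C$ is an injective cogenerator. The main obstacle is the passage from finitely generated to arbitrary modules, handled via double duals and the fact that the dual of an injective is flat, hence projective over the artinian ring $\tilde{A}$.
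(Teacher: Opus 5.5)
There is a genuine gap at the decisive step of route (b). The overall strategy is sound: show that every right $\tilde{A}$-module of finite injective dimension is injective by dualizing Lemma \ref{dualzero}(ii). The splitting of $0 \to D(E^1) \to D(E^0) \to D(Y) \to 0$ correctly gives that $D(Y)$ is projective, so $DD(Y)$ is injective. But your inference ``so $Y$ is a direct summand of $DD(Y)$'' does not follow. The canonical embedding $Y \to DD(Y)$ is in general only a pure monomorphism, and it splits exactly when $Y$ is pure-injective. You know nothing about that for an arbitrary, possibly infinitely generated, $Y$. Since $DD(Y)$ is injective, `$Y$ is a summand of $DD(Y)$' is actually equivalent to `$Y$ is injective', so this step assumes the conclusion. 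This is precisely where right noetherianity of $\tilde{A}$ must be used once more, because pure submodules of injective modules are injective only over right noetherian rings.

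The repair is short. For a finitely generated right module $X$, compute with a resolution of $X$ by finitely generated projectives; this gives the natural isomorphism $\operatorname{Tor}^{\tilde{A}}_1(X,D(Y)) \cong D(\Ext{1}{\tilde{A}}{X}{Y})$. Since $D(Y)$ is projective, hence flat, $\Ext{1}{\tilde{A}}{X}{Y}=0$ for all cyclic $X$, and $Y$ is injective by Baer's criterion. Equivalently, the split dual sequence shows that $Y \to E^0$ is a pure monomorphism, and a pure submodule of an injective module over a right noetherian ring is injective.

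With this fix, the rest of your argument is correct: the induction on injective dimension, and the use of (C3) to get $I \in \Prod(C)$. It is also a genuinely different route from the paper's. The paper lowers the injective dimension of the cotilting module by cosyzygies, using the dual of Lemma \ref{tiltshift}. It then identifies the resulting $1$-cotilting module with $D(T)$ for a $1$-tilting left module $T$ via \cite[15.31]{GT}, and Lemma \ref{dualzero}(ii) forces $T$ to be projective. Your version bypasses that cotilting machinery and proves the stronger statement that every right $\tilde{A}$-module of finite injective dimension is injective.
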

 
\begin{proof} Assume there exists an $n$-cotilting right $\tilde{A}$-module $C$ of injective dimension $n \geq 1$. If $n \geq 2$, then by (the dual version of) Lemma \ref{tiltshift}, there is a cotilting right $\tilde{A}$-module $C^\prime$ of injective dimension $n-1$ whose cotilting class equals ${}^{\perp_{\infty}} (\Omega^{-1}(C))$, where $\Omega^{-1}(C)$ denotes the first cosyzygy of $C$. By induction, we obtain thus a $1$-cotilting right $\tilde{A}$-module $\bar{C}$ of injective dimension $1$. By (the right hand version of) \cite[15.31]{GT}, $\bar{C}$ is equivalent to $D(T)$ for a $1$-tilting left $\tilde{A}$-module $T$. However, $T$ is projective by Lemma \ref{dualzero}(ii), so $\bar{C}$ is injective, a contradiction. 

Thus, all cotilting right $\tilde{A}$-modules are injective, Whence they are injective cogenerators in $\rmod {\tilde{A}}$.
\end{proof}

\medskip
\section{Right $\tilde{A}$-modules of finite projective dimension}\label{rfind}

We will keep the notation of Section \ref{cum}. In particular, we will represent right $\tilde{A}$-modules as the triples $(N,P,f)$ where $N \in \rmod A$, $P \in \rmod B$ and $f \in \Hom {A}{P \otimes _B M}{N}$ as in Proposition \ref{requiv}. Our goal is to understand relations between the tilting right $A$-module $T_f = T_A$ and the tilting right $\tilde{A}$-module $T_f = \tilde{T}_{\tilde{A}}$ whose existence---in the case when $\rfd{A}$ is finite---follows from Theorem \ref{findim}.  

We start with an investigation of the relations between the right $A$- and right $\tilde{A}$- modules of finite projective dimensions.  

\begin{lemma}\label{projdim} Assume that $(N,P,f)$ is a right $\tilde{A}$-module of finite projective dimension. Then $P$ is a projective right $B$-module.
\end{lemma}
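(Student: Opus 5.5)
The plan is to transport a finite projective resolution of $(N,P,f)$ along the ``second component'' functor to $\rmod B$, and then use that $B$ is quasi-Frobenius.

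Consider the functor $E: \rmod {\tilde{A}} \to \rmod B$ given by $U \mapsto U\varepsilon_2$. Under the equivalence of Proposition \ref{requiv}, $E$ sends a triple $(N,P,f)$ to $P$ and a morphism $(\alpha,\beta)$ to $\beta$. The functor $E$ is exact: it is isomorphic to $\Hom{\tilde{A}}{\varepsilon_2\tilde{A}}{-}$, and $\varepsilon_2\tilde{A}$ is projective. Equivalently, exactness of a sequence of triples is checked componentwise, since the $\tilde{A}$-module $F_r(N,P,f)$ is $N \oplus P$ as an abelian group.

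Next, $E$ takes projective right $\tilde{A}$-modules to projective right $B$-modules. By Proposition \ref{simp-proj-inj}(ii), every projective right $\tilde{A}$-module is a direct sum of copies of indecomposable projectives of the forms $H_A(N') = (N',0,0)$ and $K_B(P') = (P'\otimes_B M, P', id)$, with $N'$ and $P'$ indecomposable projective. Over a finite dimensional algebra every projective module is such a direct sum, so arbitrary direct sums cause no trouble. The functor $E$ sends $(N',0,0)$ to $0$ and $K_B(P')$ to $P'$, and it commutes with direct sums. Hence $E$ of any projective is projective over $B$.

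Now let $0 \to Q_m \to \dots \to Q_0 \to (N,P,f) \to 0$ be a finite projective resolution in $\rmod {\tilde{A}}$. Applying $E$ gives an exact sequence $0 \to Q_m\varepsilon_2 \to \dots \to Q_0\varepsilon_2 \to P \to 0$ of right $B$-modules with projective terms. Thus $\pd{B}{P} < \infty$. Since $B$ is quasi-Frobenius, $\rFd(B) = 0$, as noted in Section \ref{cum}. More concretely, projective $B$-modules are injective, so a finite projective resolution splits from the left, step by step. Therefore $P$ is projective.

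I expect no real obstacle. The only points needing care are the identification of $E$ with the second component of the triple, and checking that $E$ preserves projectivity. Both follow directly from Propositions \ref{requiv} and \ref{simp-proj-inj}(ii).
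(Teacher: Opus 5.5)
Your proof is correct and follows essentially the same idea as the paper. The paper builds the explicit presentation $(\ast\ast)$, whose second components form a projective presentation of $P$ in $\rmod{B}$, and iterates on syzygies, using that non-projective $B$-modules have infinite projective dimension; you package the same observation as exactness and preservation of projectives by the functor $U \mapsto U\varepsilon_2$, which handles an arbitrary finite projective resolution in one step rather than tracking a particular choice of syzygies.
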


\begin{proof} If $(N,P,f)$ is a projective right $\tilde{A}$-module, then $P$ is a projective right $B$-module by Proposition \ref{simp-proj-inj}(ii).  

(i) Assume that $(N,P,f)$ has projective dimension $n \geq 1$. Let $0 \to K \overset{\mu}\to F \overset{\pi}\to N \to 0$ be a projective presentation of $N$ in $\rmod A$, and $0 \to L \overset{\nu}\to G \overset{\rho}\to P \to 0$ a projective presentation of $P$ in $\rmod B$. 

In view of Proposition \ref{simp-proj-inj}(ii), we have the following non-split projective presentation of $(N,P,f)$ in $\rmod {\tilde{A}}$:

$$(\ast \ast) \quad \quad 0 \to (\Ker{(\pi \oplus g)},L,f^\prime) \overset{(\subseteq,\nu)}\longrightarrow (F,0,0) \oplus (G \otimes _B M,G,id) \overset{(\pi \oplus g,\rho)}\longrightarrow (N,P,f) \to 0$$ 

where where $g = f (\rho \otimes_B M)$, and there is a commutative diagram in $\rmod A$

\[
\begin{tikzcd}[column sep=2.5em, row sep=2.2em]      
& L \otimes_B M \arrow[r, "\nu \otimes_B M"] \arrow[d, "f^\prime"]
& G \otimes_B M \arrow[r, "\rho \otimes_B M"] \arrow[d, "\iota"]
& P \otimes_B M \arrow[r] \arrow[d, "f"]
& 0 \\
0 \arrow[r]
& \Ker{(\pi \oplus g)} \arrow[r, "\subseteq"]
& F \oplus (G \otimes_B M) \arrow[r, "\pi \oplus g"]
& N \arrow[r]
& 0 
\end{tikzcd} 
\]

where $\iota$ denotes the embedding of $G \otimes_B M$ on to the second component of $F \oplus (G \otimes_B M)$. 

(ii) If $P$ is not a projective right $B$-module, then $P$ has infinite projective dimension, and so does $L$, whence $(\Ker{(\pi \oplus g)},L,f^\prime)$ has projective dimension $n-1$, but it is not projective. Repeating the argument above for $(\Ker{(\pi \oplus g)},L,f^\prime)$ in place of $(N,P,f)$ etc., we conclude that $\pd{\tilde{A}}{(N,P,f)} = \infty$.
\end{proof}

Each projective right $\tilde{A}$-module is a (unique up to the ordering of summands and isomorphism) direct sum of indecomposable projective summands. So its structure is clear from Proposition \ref{simp-proj-inj}(ii). For modules of projective dimension $\geq 1$, we have the following characterization: 

\begin{lemma}\label{charproj} Let $(N,P,f)$ be a non-projective right $\tilde{A}$-module such that $P$ is a projective right $B$-module. Let $0 \to K \overset{\mu}\to F \overset{\pi}\to N \to 0$ be a projective presentation of $N$ in $\rmod A$. Then $(N,P,f)$ has a non-split projective presentation 

$$(\ast \ast \ast) \quad 0 \to (N^\prime,0,0) \overset{(\subseteq,0)}\longrightarrow (F,0,0) \oplus (P \otimes _B M,P,id) \overset{(\pi \oplus f,id)}\longrightarrow (N,P,f) \to 0.$$ 

where $N^\prime = \Ker{(\pi \oplus f)}$. Moreover, $\pd{\tilde{A}}{(N,P,f)} =  1 + \pd{A}{N^\prime}$. 
\end{lemma}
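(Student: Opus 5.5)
The plan is to specialize the presentation $(\ast\ast)$ from the proof of Lemma \ref{projdim} to the case where $P$ is projective. Then we take $G = P$, $\rho = id_P$ and $L = 0$, so that $g = f$.

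The first step is to verify exactness of $(\ast\ast\ast)$ directly in the category $\catC_{\tilde{A}}$. By Proposition \ref{simp-proj-inj}(ii), the middle term $(F,0,0) \oplus (P\otimes_B M, P, id)$ is projective, since $P$ is a direct sum of indecomposable projectives and $K_B$ commutes with direct sums. The pair $(\pi \oplus f, id_P)$ is a morphism of triples: the required square commutes because $(\pi\oplus f)\circ\iota = f = f\circ (id_P\otimes_B M)$. It is surjective because $\pi$ is already onto $N$. Its kernel is computed componentwise, as kernels in $\catC_{\tilde{A}}$ are componentwise by the equivalence of Proposition \ref{requiv} and exactness of the component functors. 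This gives $(\Ker(\pi\oplus f), 0, 0)$, where the structure map is forced to be $0$ because the $B$-component is $0$. The inclusion $(\subseteq,0)$ is obviously a morphism. The presentation is non-split: if it split, $(N,P,f)$ would be a direct summand of a projective, contradicting the hypothesis that $(N,P,f)$ is not projective.

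The second step is the dimension formula. By Remark \ref{pd-id}, $\pd{\tilde{A}}{(N',0,0)} = \pd{A}{N'}$. From the short exact sequence with projective middle term we get the standard relation: if $(N,P,f)$ is not projective, then $\pd{\tilde{A}}{(N,P,f)} = 1 + \pd{\tilde{A}}{(N',0,0)}$. This holds because the kernel of any projective presentation is a first syzygy, up to projective summands, and syzygy shifting applies. The relation holds also when the dimensions are infinite, with the convention $1+\infty=\infty$. Combining the two gives $\pd{\tilde{A}}{(N,P,f)} = 1 + \pd{A}{N'}$.

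The only mildly delicate point is the kernel computation together with the non-projectivity case. Syzygy shifting gives $\pd{}{X} = 1 + \pd{}{\Omega X}$ only when $X$ is non-projective. This is exactly why the hypothesis is included, and I would cite the usual long exact Ext argument for it. I expect no real obstacle beyond carefully checking the commutativity of the morphism square.
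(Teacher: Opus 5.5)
Your proposal is correct and follows the paper's own route: specialize $(\ast\ast)$ from the proof of Lemma \ref{projdim} to $G = P$, $\rho = id_P$, $L = 0$, then combine Remark \ref{pd-id} with the standard shift $\pd{\tilde{A}}{X} = 1 + \pd{\tilde{A}}{\Omega X}$ for non-projective $X$. You also spell out the exactness, non-splitness and dimension-shifting details that the paper's two-line proof leaves implicit.
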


\begin{proof} The presentation ($\ast \ast \ast$) is a particular instance of ($\ast \ast$) from part (i) of the proof of Lemma \ref{projdim} for $L = 0$ and $G = P$. By Remark \ref{pd-id}, $\pd{A}{N^\prime} = \pd{\tilde{A}}{(N^\prime,0,0)}$, and the claim follows.   
\end{proof}

Lemmas \ref{projdim} and \ref{charproj} yield

\begin{corollary}\label{cordim} $\rFd {\tilde{A}} \leq \rFd {\tilde{A}} + 1$ and  $\rfd {\tilde{A}} \leq \rfd {\tilde{A}} + 1$.
\end{corollary}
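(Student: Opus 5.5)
The statement as printed has a typo: it should read $\rFd{\tilde{A}} \leq \rFd{A} + 1$ and $\rfd{\tilde{A}} \leq \rfd{A} + 1$, since the inequality $\rFd{\tilde{A}} \leq \rFd{\tilde{A}}+1$ is trivial. I prove the intended version. Together with Remark \ref{pd-id}, it gives $\rFd{A} \leq \rFd{\tilde{A}} \leq \rFd{A}+1$ and the analogous bounds for the little dimensions, recovering \cite[4.21]{FGR}.

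Assume $\rFd{A} = n < \infty$; otherwise there is nothing to prove. Let $(N,P,f)$ be a right $\tilde{A}$-module of finite projective dimension. We may assume it is not projective, since otherwise its projective dimension is $0$.

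\begin{enumerate}
\item By Lemma \ref{projdim}, $P$ is a projective right $B$-module, so Lemma \ref{charproj} applies.
\item Choose a projective presentation $0 \to K \to F \to N \to 0$ of $N$ in $\rmod A$. Lemma \ref{charproj} gives the presentation $(\ast\ast\ast)$ and the formula $\pd{\tilde{A}}{(N,P,f)} = 1 + \pd{A}{N^\prime}$, where $N^\prime = \Ker{(\pi \oplus f)}$.
\item In particular $\pd{A}{N^\prime}$ is finite. By Remark \ref{pd-id} it equals $\pd{\tilde{A}}{(N^\prime,0,0)}$, which is finite because $(N^\prime,0,0)$ is a syzygy of a module of finite projective dimension.
\item Hence $N^\prime$ is a right $A$-module of finite projective dimension, so $\pd{A}{N^\prime} \leq \rFd{A} = n$. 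Therefore $\pd{\tilde{A}}{(N,P,f)} \leq n+1$.
\end{enumerate}

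For the little finitistic dimension, run the same argument with $(N,P,f)$ finitely generated, i.e.\ finite dimensional. Then $N$ and $P$ are finite dimensional, so $F$ can be chosen finitely generated. $P \otimes_B M$ is finite dimensional, so $N^\prime \subseteq F \oplus (P \otimes_B M)$ is finitely generated. Thus $N^\prime$ lies in $\mathcal P^{<\infty}$ over $A$ and $\pd{A}{N^\prime} \leq \rfd{A}$.

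The only step that needs care is the finiteness of $\pd{A}{N^\prime}$. It follows from the equality in Lemma \ref{charproj} (or from the syzygy argument in step 3). One also has to keep track of the finite generation of $N^\prime$ in the little case.
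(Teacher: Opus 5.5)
Your proof is correct and follows the same route as the paper, which derives the intended inequalities $\rFd{\tilde{A}} \leq \rFd{A}+1$ and $\rfd{\tilde{A}} \leq \rfd{A}+1$ directly from Lemma \ref{projdim} and Lemma \ref{charproj}. Your reading of the typo matches how the corollary is used later, in Section \ref{fingd} and Open Problems \ref{op}, and your check that $N^\prime$ is finitely generated in the little case fills in the one detail the paper leaves implicit.
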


\begin{lemma}\label{extension}
Let $(N,P,f)$ be a right $\tilde{A}$-module such that $P$ is a projective right $B$-module. Then $P$ has a decomposition $P = P^\prime \oplus P^{\prime \prime}$ that induces an exact sequence of right $\tilde{A}$-modules
$$(\dagger) \quad 0 \to (N,P^\prime,f^\prime) \overset{(id,\alpha)}\longrightarrow (N,P,f) \overset{(0,\beta)}\longrightarrow (0,P^{\prime \prime},0) \to 0$$ 
where $\alpha : P^\prime \to P$ is the split inclusion, $\beta : P \to P^{\prime \prime}$ the split projection, $\Ker f \cong P^{\prime \prime} \otimes_B M$, and $f^\prime = f \restriction (P^{\prime} \otimes_B M)$ is a monomorphism. 

Moreover, if $(N,P^\prime,f^\prime)$ is not projective, then 
$$\pd{\tilde{A}}{(N,P^\prime,f^\prime)} = \pd{A}{\coker f}.$$ 
Also, if $\Ker{f} \neq 0$, then $\pd{\tilde{A}}{(0,P^{\prime \prime},0)} = 1 + \pd{A}{\Ker f}$.
\end{lemma}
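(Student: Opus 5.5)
The plan is to exploit the very rigid structure of $P \otimes_B M$ when $P$ is projective, and then to read off the projective dimensions from the presentation $(\ast\ast\ast)$ of Lemma \ref{charproj}.

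\emph{Step 1 (the decomposition).} Since $B$ is commutative and $P$ is projective, $P \cong \bigoplus_i B_i^{(\kappa_i)}$. Because $J(B)M = 0$, we get $P \otimes_B M \cong (P/PJ(B)) \otimes_B M \cong \bigoplus_i M_i^{(\kappa_i)} \cong \bigoplus_i S_i^{(\kappa_i)}$, a semisimple right $A$-module. Moreover, this identification is natural: a direct sum decomposition of the top $P/PJ(B)$ corresponds exactly to a direct sum decomposition of $P \otimes_B M$. Concretely, for each $i$ the $S_i$-isotypic parts of both are $\bigoplus^{\kappa_i}$ of one-dimensional spaces, matched by $\bar p \mapsto p \otimes m_i$.

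Now $\Ker f \subseteq P\otimes_B M$ is a submodule of a semisimple module, hence a direct summand; pick a complement $W$. Transport $\Ker f \oplus W$ to a decomposition $\bar P'' \oplus \bar P'$ of the top $P/PJ(B)$. Since $B$ is artinian, projective covers exist, so this decomposition lifts to $P = P' \oplus P''$ with $P''\otimes_B M = \Ker f$ and $P'\otimes_B M = W$.

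Then $f' = f\restriction (P'\otimes_B M)$ is monic. The pairs $(id,\alpha)$ and $(0,\beta)$ are morphisms in $\catC_{\tilde A}$: the first because $f(\alpha\otimes M) = f'$, the second because the target has zero first component. Finally, $(\dagger)$ is exact because it is exact componentwise.

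\emph{Step 2 (the projective dimension of $(N,P',f')$).} Apply Lemma \ref{charproj} to $(N,P',f')$ with a projective presentation $0\to K\to F\overset{\pi}\to N\to 0$. Since $f'$ is monic, projecting $N' = \Ker(\pi\oplus f')$ onto $F$ is injective, with image $\pi^{-1}(\im f')$. Hence there is a short exact sequence $0\to N'\to F\to \coker f' \to 0$, and $\coker f' = \coker f$ because $\im f' = \im f$. So $N'$ is a first syzygy of $\coker f$.

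The point needing care is the case where $\coker f$ is projective. Then $N \cong \im f' \oplus \coker f$, so
\[
(N,P',f') \cong K_B(P') \oplus (\coker f,0,0),
\]
which is projective. Thus if $(N,P',f')$ is not projective, then $\coker f$ is not projective, so $\pd{A}{\coker f} = 1+\pd{A}{N'}$. Lemma \ref{charproj} then gives $\pd{\tilde A}{(N,P',f')} = 1 + \pd{A}{N'} = \pd{A}{\coker f}$, with both sides possibly infinite.

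\emph{Step 3 (the projective dimension of $(0,P'',0)$).} If $\Ker f\neq 0$, then $P''\neq 0$, and $(0,P'',0)$ is not projective by Proposition \ref{simp-proj-inj}(ii). Apply Lemma \ref{charproj} with $N=0$ and $F=0$. Here $N' = \Ker(0 \oplus 0) = P''\otimes_B M \cong \Ker f$, so $\pd{\tilde A}{(0,P'',0)} = 1+\pd{A}{\Ker f}$.

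The main obstacle is Step 1, namely producing a decomposition of $P$ itself (not merely of $P\otimes_B M$) that realizes $\Ker f$. This is handled by lifting idempotents or decompositions of the top through projective covers over the artinian ring $B$. The subtle case in Step 2 (projective cokernel) is resolved by the splitting argument above.
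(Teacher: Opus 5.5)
Your proof is correct and follows essentially the paper's route: you split off $\Ker f$ inside the semisimple module $P\otimes_B M$ and realize this splitting on $P$. You then identify the module $N'$ of Lemma \ref{charproj}, via the pullback along the monic $f'$, with a first syzygy of $\coker f$, and read off $\pd{\tilde{A}}{(0,P'',0)}$ from the presentation with $N=F=0$.

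The differences are minor. The paper chooses the complement of $\Ker f$ among the given summands $S_i^{(U_i)}$, so $P'$ and $P''$ are sub-sums of $\bigoplus_i B_i^{(U_i)}$ and one only gets $\Ker f\cong P''\otimes_B M$; your lifting through projective covers gives equality instead. Your check that $\coker f$ is non-projective whenever $(N,P',f')$ is non-projective is needed for $\pd{A}{\coker f}=1+\pd{A}{N'}$, and it makes explicit a step the paper leaves tacit.
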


\begin{proof} Since $P$ is a projective right $B$-module, w.l.o.g., $P = \bigoplus_{1 \leq i \leq q} B_i ^{(U_i)}$ and $P \otimes_B M = \bigoplus_{1 \leq i \leq q} S_i ^{(U_i)}$ for some sets $U_i$ ($1 \leq i \leq q$). Since $P \otimes_B M$ is a completely reducible right $A$-module, $\Ker f$ is a direct summand in $P \otimes_B M$. So there exists subsets $V_i \subseteq U_i$ ($1 \leq i \leq q$) such that 
$\Ker f \oplus \bigoplus_{1 \leq i \leq q} S_i ^{(V_i)} = P \otimes_B M$. 

Let $P^\prime = \bigoplus_{1 \leq i \leq q} B_i ^{(V_i)}$ and  $P^{\prime \prime} = \bigoplus_{1 \leq i \leq q} B_i ^{(U_i \setminus V_i)}$. Then $\Ker f \oplus (P^\prime \otimes_B M) = P \otimes_B M$ and 
$\Ker f \cong P^{\prime \prime} \otimes_B M$. Moreover, $f^\prime = f \restriction (P^\prime \otimes_B M)$ is monic. 

Since the diagram 

\begin{tikzcd}[column sep=2.5em, row sep=2.2em]
0 \arrow[r] 
& P^\prime \otimes_B M \arrow[r, "\alpha \otimes_B M"] \arrow[d, "f^\prime"] 
& P \otimes _B M \arrow[r, "\beta \otimes _B M"] \arrow[d, "f"] 
& P^{\prime \prime} \otimes _B M \arrow[r] \arrow[d] 
& 0 \\
& N \arrow[r, equal] 
& N \arrow[r]  
& 0 
\end{tikzcd}

is commutative, the sequence $(\dagger)$ is exact in $\tilde{A}$. 

By Proposition \ref{simp-proj-inj}(ii), the sequence 
$$0 \to {(P^{\prime \prime} \otimes _B M,0,0)} \overset{(0,id)}\longrightarrow {(P^{\prime \prime} \otimes _B M,P^{\prime \prime},id)} \overset{(0,id)}\longrightarrow {(0,P^{\prime \prime},0)} \to 0$$
is a projective presentation of $(0,P^{\prime \prime},0)$. Since $P^{\prime \prime} \otimes _B M \cong \Ker f$, if $\Ker f \neq 0$, i.e., if $P^{\prime \prime} \neq 0$, then the presentation above does not split by Remark \ref{pd-id}, whence $\pd{\tilde{A}}{(0,P^{\prime \prime},0)} = 1 + \pd{A}{\Ker f}$.

Since $P^\prime$ is projective, if $X = (N,P^\prime,f^\prime)$ is not a projective right $\tilde{A}$-module, then Lemma \ref{charproj} applies to $X$, and yields a first syzygy of $X$ of the form $(N^\prime,0,0)$ where $N^\prime$ is the kernel of the map $\pi \oplus f^\prime : F \oplus (P^\prime \otimes_B M) \to N$. Since $f^\prime$ is monic, $N^\prime$ is isomorphic to the pull-back of $f^\prime$ and $- \pi$. Then $N^\prime$ is also isomorphic the first syzygy of the right $A$-module $\coker f^\prime = \coker f$. Thus $\pd{\tilde{A}}{(N,P^\prime,f^\prime)} = \pd{A}{\coker f}$.  
\end{proof}

The injective finitely generated right $\tilde{A}$-module $(0,B,0)$ is easily seen to be a test module for the finiteness of $\rgd{\tilde{A}}$: 

\begin{lemma}\label{test} $\pd{\tilde{A}}{(0,B,0)} = 1 + \max_{1 \leq i \leq q}\pd{A}{S_i}$. 

So $\pd{\tilde{A}}{(0,B,0)} = 1 + \rgd {A}$.
\end{lemma}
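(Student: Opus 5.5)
We apply Lemma \ref{extension} to $(0,B,0)$, viewed as the triple $(N,P,f)=(0,B,0)$. Here $P=B$ is projective and $f=0$, so $\Ker f = B\otimes_B M \cong M \cong \bigoplus_{1\le i\le q} S_i$ as right $A$-modules. The decomposition in Lemma \ref{extension} is then $P'=0$ and $P''=B$. The submodule $(N,P',f')$ is $0$, so the sequence $(\dagger)$ degenerates to $(0,B,0)\cong(0,P'',0)$.

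The last assertion of Lemma \ref{extension} applies because $\Ker f\neq 0$ (indeed $q\ge 1$). It gives
\[\pd{\tilde{A}}{(0,B,0)} = 1+\pd{A}{\textstyle\bigoplus_{i} S_i} = 1+\max_{1\le i\le q}\pd{A}{S_i},\]
using that the projective dimension of a finite direct sum is the maximum of the projective dimensions of the summands. As a sanity check, the projective presentation is
\[0\to (M,0,0)\to (M,B,id)\to (0,B,0)\to 0.\]
It does not split because $(0,B,0)$ is not projective: the indecomposable projectives in Proposition \ref{simp-proj-inj}(ii) of the form $K_B(P)$ have nonzero first component. Remark \ref{pd-id} then identifies $\pd{\tilde{A}}{(M,0,0)}$ with $\pd{A}{M}$. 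This remains valid when some $S_i$ has infinite projective dimension, with both sides equal to $\infty$.

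For the second sentence, $A$ is a finite-dimensional algebra, hence right artinian and in particular right semiartinian. By the remark after Definition \ref{defdim}, $\rgd A$ is the supremum of the projective dimensions of the simple right $A$-modules. Since $\{S_1,\dots,S_q\}$ is a complete set of simple right $A$-modules, this supremum is $\max_i \pd{A}{S_i}$, which yields $\pd{\tilde{A}}{(0,B,0)} = 1+\rgd A$.

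The only point needing care is the identification $B\otimes_B M\cong \bigoplus_i S_i$ as right $A$-modules. This follows from the bimodule description $M=\bigoplus_i M_i$ with $M_i\cong S_i$. No serious obstacle is expected.
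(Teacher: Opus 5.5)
Your proposal is correct and follows the paper's argument. The paper writes down the non-split presentation $0 \to (M,0,0) \to (M,B,id) \to (0,B,0) \to 0$ with $M \cong \bigoplus_i S_i$ and applies Remark \ref{pd-id}; the last clause of Lemma \ref{extension} with $P''=B$ does exactly this internally. Your non-splitting argument via the shape of the indecomposable projectives, and your derivation of $\rgd A = \max_i \pd{A}{S_i}$ from semiartinianity, supply details the paper leaves implicit.
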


\begin{proof} $(0,B,0)$ has a non-split projective presentation $0 \to (M,0,0) \to (M,B,id) \to {(0,B,0)} \to 0$, where $M = \bigoplus_{1 \leq i \leq q} S_i$, and the claim follows by Remark \ref{pd-id}.
\end{proof}

The next lemma will be useful for checking the vanishing of Ext in $\rmod {\tilde{A}}$:

\begin{lemma}\label{vanish} Let $Q \in \rmod A$ and $(N,P,f) \in \rmod {\tilde{A}}$. Then for each $i \geq 1$, $\Ext i{\tilde{A}}{(Q,0,0)}{(N,P,f)} = 0$, if and only if $\Ext i{A}{Q}{N} = 0$.
\end{lemma}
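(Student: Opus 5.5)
The plan is to prove the stronger statement that there is a natural isomorphism
$$\Ext i{\tilde{A}}{(Q,0,0)}{(N,P,f)} \;\cong\; \Ext i{A}{Q}{N} \qquad (i \geq 0),$$
from which the claimed equivalence follows at once. The natural route is to compute both sides with a single projective resolution of $Q$, transported from $\rmod A$ to $\rmod {\tilde{A}}$ by the embedding $H_A$.

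First, I take a projective resolution $\cdots \to F_1 \to F_0 \to Q \to 0$ in $\rmod A$. As noted in Remark \ref{pd-id}, $H_A$ is exact and sends projective right $A$-modules to projective right $\tilde{A}$-modules (Proposition \ref{simp-proj-inj}(ii)). Hence
$$\cdots \to (F_1,0,0) \to (F_0,0,0) \to (Q,0,0) \to 0$$
is a projective resolution of $(Q,0,0)$ in $\catC_{\tilde{A}} \simeq \rmod {\tilde{A}}$. Therefore $\Ext i{\tilde{A}}{(Q,0,0)}{(N,P,f)}$ is the $i$-th cohomology of the complex $\Hom{\tilde{A}}{(F_\bullet,0,0)}{(N,P,f)}$.

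Second, I identify, for any $X \in \rmod A$,
$$\Hom{\tilde{A}}{(X,0,0)}{(N,P,f)} \;\cong\; \Hom{A}{X}{N},$$
naturally in $X$. A morphism in $\catC_{\tilde{A}}$ is a pair $(\alpha,\beta)$ with $\alpha \colon X \to N$ and $\beta \colon 0 \to P$. So $\beta = 0$ necessarily. The commutativity condition of Definition \ref{catC} concerns maps out of $0 \otimes_B M = 0$, so it holds automatically for every $\alpha$. Thus $(\alpha,0) \mapsto \alpha$ is a bijection, and it is clearly additive and compatible with composition by $H_A(\gamma) = (\gamma,0)$ for $A$-homomorphisms $\gamma \colon X' \to X$.

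Third, naturality gives an isomorphism of cochain complexes
$$\Hom{\tilde{A}}{(F_\bullet,0,0)}{(N,P,f)} \;\cong\; \Hom{A}{F_\bullet}{N}.$$
Taking cohomology yields the asserted isomorphism of Ext groups, and in particular the equivalence of vanishing for each $i \geq 1$.

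There is no real obstacle here. The only point that needs care is checking that the Hom identification is natural, so that it commutes with the differentials. This follows because the differentials of the transported resolution are exactly $H_A(d_j) = (d_j,0)$.
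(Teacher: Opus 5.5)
Your proof is correct, but the key step differs from the paper's. The paper proves only the equivalence of vanishing. It first notes that the syzygy of $(\bar N,0,0)$ can be taken to be $(\Omega(\bar N),0,0)$, and uses dimension shifting to reduce to $i=1$. It then argues with extensions. An extension $0\to N\to N'\to Q\to 0$ in $\rmod{A}$ gives the extension $0\to (N,P,f)\to (N',P,f)\to (Q,0,0)\to 0$, and a splitting of the latter forces a splitting of the former. Conversely, in any extension $0\to (N,P,f)\overset{(\alpha,\beta)}{\longrightarrow}(N',P',f')\to (Q,0,0)\to 0$ the map $\beta$ is an isomorphism, so a retraction $\alpha'$ of $\alpha$ yields the retraction $(\alpha',\beta^{-1})$.

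You use the same ingredient, namely that $H_A$ is exact and preserves projectives, but you transport a whole projective resolution at once. You then replace the splitting analysis by the natural identification $\Hom{\tilde{A}}{(X,0,0)}{(N,P,f)}\cong\Hom{A}{X}{N}$. This is the adjunction between $H_A\cong -\otimes_A\varepsilon_1\tilde{A}$ and the exact functor $U\mapsto U\varepsilon_1$. Your route is shorter and treats all degrees uniformly. It also proves more: a natural isomorphism $\Ext{i}{\tilde{A}}{(Q,0,0)}{(N,P,f)}\cong\Ext{i}{A}{Q}{N}$ for all $i\geq 0$, so these groups do not depend on $P$ and $f$ at all. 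The paper's argument stays at the level of short exact sequences and exhibits explicit splittings, but as written it yields only simultaneous vanishing.
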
 

\begin{proof} In view of Proposition \ref{simp-proj-inj}(ii) and Remark \ref{pd-id}, the 1st syzygy $\Omega((\bar{N},0,0))$ of a module $(\bar{N},0,0) \in \rmod {\tilde{A}}$ can be taken of the form $(\Omega(\bar{N}),0,0))$ where $\Omega(\bar{N})$ is the 1st syzygy of $\bar{N}$ in $\rmod A$. Thus it suffices to prove the claim for $i = 1$, and then use dimension shifting.

Consider a short exact sequence in $\rmod A$ of the form 
$$(\Delta) \quad 0 \to N \to N^\prime \to Q \to 0.$$  
Then there is a short exact sequence of the form 
$$0 \to (N,P,f) \to (N^\prime,P,f) \to (Q,0,0) \to 0$$ 
in $\rmod {\tilde{A}}$. If it splits, then so does $(\Delta)$. This proves the only-if part.

Conversely, let $0 \to (N,P,f) \overset{(\alpha,\beta)}\longrightarrow (N^\prime,P^\prime,f^\prime) \overset{(\gamma,0)}\longrightarrow (Q,0,0) \to 0$ be a short exact sequence in $\rmod {\tilde{A}}$. Then we have the commutative diagram 

\begin{tikzcd}[column sep=2.5em, row sep=2.2em]
0 \arrow[r] 
& P \otimes_B M \arrow[r, "\beta \otimes_B M"] \arrow[d, "f"] 
& P^\prime \otimes _B M \arrow[r, "0"] \arrow[d, "f^\prime"] 
& 0 \arrow[d] \\
0 \arrow[r] 
& N \arrow[r, "\alpha"] 
& N^\prime \arrow[r, "\gamma"] 
& Q \arrow[r] 
& 0
\end{tikzcd}

If $\Ext 1{A}{Q}{N} = 0$, then the bottom row splits, so there is $\alpha^\prime : N^\prime \to N$ such that $\alpha^\prime \alpha = id$. The commutativity of the left-hand square yields $\alpha f = f^\prime (\beta \otimes_B M)$. Since $\beta$ is an isomorphisms, $\alpha f (\beta^{-1} \otimes_B M) = f^\prime$, and $\alpha ^\prime f^\prime = \alpha^\prime \alpha f (\beta^{-1} \otimes_B M) = 
f (\beta^{-1} \otimes_B M)$. So $(\alpha^\prime,\beta^{-1})$ is a spliting map for $(\alpha,\beta)$, proving the if-part.
\end{proof}

We arrive at a general relation between tilting modules in $\rmod {\tilde{A}}$ and in $\rmod A$:

\begin{theorem}\label{generalrel} Let $X = (N,P,f)$ be an $n$-tilting $\tilde{A}$-module of projective dimension $n \geq 1$. Let $N^\prime \in \rmod A$ be the module from Lemma \ref{charproj}, so $X^\prime = (N^\prime,0,0)$ is the first syzygy of $X$ in $\rmod {\tilde{A}}$. 

Then $\mathcal T ^\prime = \{ N^\prime \}^{\perp_{\infty}}$ is an $(n-1)$-tilting class in $\rmod A$, and $\mathcal T = \{ X^\prime \}^{\perp_\infty} = \{ (\bar{N},\bar{P},\bar{f})\in \rmod {\tilde{A}} \mid \bar{N} \in \mathcal T \}$ is an $(n-1)$-tilting class in $\rmod {\tilde{A}}$. 
\end{theorem}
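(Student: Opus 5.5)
The plan is to transfer everything through the embedding $H_A$, using Lemma \ref{vanish} as the bridge between Ext over $\tilde{A}$ and Ext over $A$. (I read ``$\bar{N} \in \mathcal T$'' in the statement as $\bar{N} \in \mathcal T^\prime$, the class in $\rmod A$.)

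\textbf{Step 1: the syzygy.} Since $X$ is tilting, it has finite projective dimension. By Lemma \ref{projdim}, $P$ is a projective right $B$-module. As $n \geq 1$, $X$ is not projective, so Lemma \ref{charproj} applies. It shows that the first syzygy of $X$ can be taken to be $X^\prime = (N^\prime,0,0)$, and that $\pd{A}{N^\prime} = n-1$.

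\textbf{Step 2: the class over $\tilde{A}$.} By Lemma \ref{tiltshift}, the hereditary cotorsion pair $(\mathcal A, \mathcal T)$ generated by $\{X^\prime\}$ is $(n-1)$-tilting, with $\mathcal T = \{X^\prime\}^{\perp_\infty}$. Lemma \ref{vanish} is stated for each fixed $i \geq 1$. Applying it to every $i$ gives
\[
\mathcal T = \{ (\bar{N},\bar{P},\bar{f}) \mid \Ext{i}{A}{N^\prime}{\bar{N}} = 0 \text{ for all } i \geq 1 \} = \{ (\bar N,\bar P,\bar f) \mid \bar N \in \mathcal T^\prime \}.
\]
This is the claimed description of $\mathcal T$.

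\textbf{Step 3: the class over $A$.} It remains to show that $\mathcal T^\prime$ is an $(n-1)$-tilting class in $\rmod A$. I will use Theorem \ref{chartilt} for the pair $(\mathcal A^\prime, \mathcal T^\prime)$, where $\mathcal A^\prime = {}^{\perp}\mathcal T^\prime$. There are three things to check.
\begin{itemize}
\item \emph{Cotorsion pair.} Let $\mathcal S$ consist of $N^\prime$ together with its syzygies. Then $\mathcal T^\prime = \mathcal S^{\perp}$, so $\mathcal T^\prime$ is the right class of the cotorsion pair $({}^\perp(\mathcal S^\perp), \mathcal S^\perp)$ generated by a set, which is standard \cite[Part III]{GT}.
\item \emph{Closure under direct sums.} We have $\bar N \in \mathcal T^\prime$ iff $(\bar N,0,0) \in \mathcal T$. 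The functor $H_A$ commutes with direct sums, and the tilting class $\mathcal T$ is closed under direct sums by Theorem \ref{chartilt}. Hence $\mathcal T^\prime$ is closed under direct sums.
\item \emph{Hereditary with $\mathcal A^\prime \subseteq \mathcal P_{n-1}$.} Let $Q \in \mathcal A^\prime$ and $Y = (\bar N,\bar P,\bar f) \in \mathcal T$. Then $\bar N \in \mathcal T^\prime$, so $\Ext{1}{A}{Q}{\bar N} = 0$, and Lemma \ref{vanish} gives $\Ext{1}{\tilde{A}}{(Q,0,0)}{Y} = 0$. Thus $(Q,0,0) \in {}^\perp \mathcal T = \mathcal A \subseteq \mathcal P_{n-1}$. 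Remark \ref{pd-id} then gives $\pd{A}{Q} = \pd{\tilde{A}}{(Q,0,0)} \leq n-1$. Because $(\mathcal A,\mathcal T)$ is hereditary, $\Ext{i}{\tilde{A}}{(Q,0,0)}{(\bar N,0,0)} = 0$ for all $i \geq 1$. The converse direction of Lemma \ref{vanish} yields $\Ext{i}{A}{Q}{\bar N} = 0$, so $(\mathcal A^\prime, \mathcal T^\prime)$ is hereditary.
\end{itemize}
Theorem \ref{chartilt} then shows that the pair is $(n-1)$-tilting.

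\textbf{Expected obstacle.} The delicate point is the bookkeeping in Step 3. One must confirm that $\mathcal T^\prime$ really is the right half of a cotorsion pair, not just an Ext-orthogonal class. One must also make sure that membership of $(Q,0,0)$ in ${}^\perp\mathcal T$ needs only $\Ext{1}{A}{Q}{\bar N} = 0$. That is exactly what the ``if'' part of Lemma \ref{vanish} provides, and it holds for arbitrary $\bar P$ and $\bar f$.
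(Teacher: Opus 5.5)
Your proposal is correct and follows the paper's proof. Both use Lemma \ref{tiltshift} for $\mathcal T$, then Lemma \ref{vanish} for the description of $\mathcal T$ and for closure of $\mathcal T^\prime$ under direct sums, and finally Theorem \ref{chartilt} for $\mathcal T^\prime$; you are also right that ``$\bar N \in \mathcal T$'' in the statement should read $\bar N \in \mathcal T^\prime$. The only difference is that you explicitly check the cotorsion-pair, heredity and $\mathcal A^\prime \subseteq \mathcal P_{n-1}$ hypotheses by pulling them back from $(\mathcal A,\mathcal T)$ through $H_A$, whereas the paper compresses all of this into the single remark that $N^\prime$ has projective dimension $n-1$.
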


\begin{proof} $\mathcal T$ is an $(n-1)$-tilting class in $\rmod {\tilde{A}}$ by Lemma \ref{tiltshift}. Moreover, $\mathcal T = \{ (\bar{N},\bar{P},\bar{f})\in \rmod {\tilde{A}} \mid \bar{N} \in \mathcal T ^\prime \}$ by Lemma \ref{vanish}. Lemma \ref{vanish} also shows that the class $\mathcal T ^\prime = \{ N^\prime \}^{\perp_{\infty}}$ is closed under direct sums. Since $\pd {A}{N^\prime} = n-1$ by Lemma \ref{charproj}, $\mathcal T$ is an $(n-1)$-tilting class in $\rmod A$ by Theorem \ref{chartilt}.
\end{proof}

\medskip
The results of this section already make it possible to describe in detail the following simple case of the Cummings construction:

\begin{proposition}\label{simcase} Assume that $A = B$. Then $\rgd {\tilde{A}} = \infty$, but $\rfd {\tilde{A}} = \rFd {\tilde{A}} = 0$. In particular, we can take $\tilde{T}_f = (A,0,0) \oplus (M,B,id)$. 
\end{proposition}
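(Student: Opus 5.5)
Here $A = B$ must be read as: $A$ is isomorphic to one of the local algebras $B_i$. Since $A$ is basic and indecomposable, this forces $q = 1$ and $A = B = k[x]/x^2k[x]$. This is a commutative quasi-Frobenius local algebra whose unique simple module $S = S_1$ has infinite projective dimension. Also $M = S$ as a right $A$-module, and every right $A$-module of finite projective dimension is projective, i.e.\ $\rFd A = 0$.

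\emph{Step 1: infinite global dimension.} I would get $\rgd{\tilde{A}} = \infty$ directly from Lemma \ref{test}, since $\pd{\tilde{A}}{(0,B,0)} = 1 + \pd{A}{S} = \infty$. Alternatively, Remark \ref{pd-id} gives $\rgd{\tilde{A}} \geq \rgd{A} = \infty$.

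\emph{Step 2: finite projective dimension forces projectivity.} Let $X = (N,P,f)$ have finite projective dimension. By Lemma \ref{projdim}, $P$ is projective, so Lemma \ref{extension} applies and gives the exact sequence $(\dagger)$
$$0 \to (N,P',f') \to X \to (0,P'',0) \to 0,$$
with $f'$ monic and $\Ker f \cong P'' \otimes_B M$.

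First I would show $P'' = 0$. If $P'' \neq 0$, then $\Ker f \cong S^{(U)}$ is nonzero with infinite projective dimension over $A$, so Lemma \ref{extension} gives $\pd{\tilde{A}}{(0,P'',0)} = \infty$. Now look at $(\dagger)$. If $(N,P',f')$ is projective, then $(0,P'',0)$ is a quotient of a module of finite projective dimension by a projective, hence has finite projective dimension, a contradiction. If $(N,P',f')$ is not projective, Lemma \ref{extension} gives $\pd{\tilde{A}}{(N,P',f')} = \pd{A}{\coker f}$. I would check this is finite: in the sequence $(\ast\ast\ast)$ for $(N,P',f')$, the syzygy is $(N'',0,0)$ with $N''$ the first $A$-syzygy of $\coker f$. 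Then $(\dagger)$, having two terms of finite projective dimension, again forces $\pd{\tilde{A}}{(0,P'',0)} < \infty$, a contradiction.

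The step I expect to need the most care is exactly this finiteness bookkeeping in $(\dagger)$. I would handle it by the standard long exact Ext sequence: among three terms of a short exact sequence, if two have finite projective dimension, so does the third.

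Once $P'' = 0$, we have $X = (N,P',f')$. If $X$ were not projective, then $\pd{\tilde{A}}{X} = \pd{A}{\coker f}$ would be finite. Since $\rFd A = 0$, this means $\coker f$ is projective, so $\pd{\tilde{A}}{X} = 0$, contradicting non-projectivity. Hence every right $\tilde{A}$-module of finite projective dimension is projective, and $\rfd{\tilde{A}} = \rFd{\tilde{A}} = 0$.

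\emph{Step 3: the tilting module.} By Theorem \ref{findim} there is a $0$-tilting module $\tilde{T}_f$ whose tilting class is $(\mathcal P^{<\infty})^\perp$, the class of all modules, since $\mathcal P^{<\infty}$ consists of projectives. So $\tilde{T}_f$ can be taken to be the regular module $\tilde{A}$. By Proposition \ref{simp-proj-inj}(ii), $\tilde{A}_{\tilde{A}} = \tilde{A}\varepsilon_1 \oplus \tilde{A}\varepsilon_2$ corresponds to $H_A(A) \oplus K_B(B) = (A,0,0) \oplus (B\otimes_B M, B, id)$. Using $B \otimes_B M = M$, this is $(A,0,0) \oplus (M,B,id)$, as claimed.
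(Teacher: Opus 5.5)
Steps 1 and 3 are fine (one slip: the right regular module is $\varepsilon_1\tilde{A}\oplus\varepsilon_2\tilde{A}$, not $\tilde{A}\varepsilon_1\oplus\tilde{A}\varepsilon_2$). The last paragraph of Step 2 is also fine once $P''=0$ is known. The gap is in your proof that $P''=0$, in the subcase where $(N,P',f')$ is not projective. To apply two-out-of-three to $(\dagger)$ you need $\pd{\tilde{A}}{(N,P',f')}=\pd{A}{\coker f}<\infty$, and nothing you write establishes this. Identifying the syzygy of $(N,P',f')$ as $(\Omega_A(\coker f),0,0)$ says nothing about finiteness, since $\coker f$ is a priori an arbitrary $A$-module, and in $(\dagger)$ only the middle term $X$ is known to have finite projective dimension. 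The argument is in fact circular. If $P''\neq 0$, then $\pd{\tilde{A}}{(0,P'',0)}=\infty$, and $(\dagger)$ itself \emph{forces} $\pd{\tilde{A}}{(N,P',f')}=\infty$. So finiteness of that term cannot be obtained before $P''\neq 0$ has been excluded by other means, and the long exact Ext sequence alone will not do it.

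The missing ingredient is information from $\pd{\tilde{A}}{X}<\infty$ that does not pass through $(\dagger)$. For instance, if $X$ is not projective, Lemma \ref{charproj} shows that $N'=\Ker(\pi\oplus f)\subseteq F\oplus(P\otimes_B M)$ has finite projective dimension, hence is projective because $\rFd(A)=0$. Now $\Ker f\cong N'\cap(0\oplus P\otimes_B M)$ is a semisimple submodule of $N'$, so it lies in $\operatorname{soc}(N')$. For projective modules over $k[x]/x^2k[x]$ the socle equals the radical, so $\operatorname{soc}(N')=N'J(A)\subseteq FJ(A)\oplus (P\otimes_BM)J(A)=FJ(A)\oplus 0$. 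Intersecting with $0\oplus P\otimes_B M$ gives $\Ker f=0$, i.e.\ $P''=0$, and your final paragraph then finishes the proof.

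The paper supplies the missing finite term differently. It uses that $N'$ is projective \emph{and injective} ($A$ is QF) to split off a projective summand $(G,0,0)$ and reduce to $N=P\otimes_BM$. It then shows $f$ must be onto, since otherwise a summand $(C,0,0)$ appears with $C\neq 0$ semisimple of infinite projective dimension. Hence $\coker f=0$ and $(N,P',f')$ is projective, and only then does $(\dagger)$ yield the contradiction for $\Ker f\neq 0$.
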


\begin{proof} Assume that $(N,P,f) \in \rmod {\tilde{A}}$ has finite projective dimension. Then $P$ is a projective right $B$-module by Lemma \ref{projdim}. 

If $(N,P,f)$ is not projective, then the right $A$-module $0 \neq N^\prime = \Ker{(\pi \oplus f)}$ from Lemma \ref{charproj} has finite projective dimension. Since $A = B$, $N^\prime$ is both projective and injective. Then $N^\prime$ is a direct summand in $F$, say $F = N^\prime \oplus G$, and the short exact sequence $0 \to N^\prime \subseteq F \oplus (P \otimes_B M) \overset{\pi \oplus f}\longrightarrow N \to 0$ splits. Then $(N,P,f) = (G,0,0) \oplus (P \otimes_B M,P,f)$. Since the module $(G,0,0)$ is projective by Proposition \ref{simp-proj-inj}(ii), we can w.l.o.g.\ assume that $(N,P,f) = (P \otimes_B M,P,f)$.  

If $f$ is not surjective, then there is a further decomposition $(P \otimes_B M,P,f) = (\im{f},P,f) \oplus (C,0,0)$ where $(\im{f}) \oplus C = P \otimes_B M$ and $0 \neq C$ is a completely reducible $A$-module of infinite projective dimension, in contradiction with Remark \ref{pd-id}. Thus $f$ is onto. 

By Lemma \ref{extension}, the module $(N,P^\prime,f^\prime)$ from that Lemma is projective (because $\coker {f^\prime} = \coker {f} = 0$). If $\ker {f} \neq 0$, then $\ker {f}$ is completely reducible, whence $(0,P^{\prime \prime},0)$ has infinite projective dimension by Lemma \ref{extension}. Thus $f$ is bijective, whence $(P \otimes_B M,P,f) \cong (P \otimes_B M,P,id)$ is projective, in contradiction with our assumption that $(N,P,f)$ is not projective.
\end{proof}

\medskip
\section{The case of finite global dimension of the algebra $A$}\label{fingd} 

In this section, we will restrict our attention to the setting when $\rgd {A} = n$ is finite. In this case, Corollary \ref{cordim} and Lemma \ref{test} give $\rfd {\tilde{A}} = \rFd {\tilde{A}} = n + 1$. However, $\rgd {\tilde{A}} = \infty$:

\begin{example}\label{exb} Since $B$ is commutative, the $(B,A)$-bimodule $M$ is also a right $B$-module, so we can consider the (finitely generated) right $\tilde{A}$-module $X = (0,M,0)$. We will show that $X$ has infinite projective dimension. 

There is a projective presentation of $M$ of the form $0 \to M \overset{\iota}\hookrightarrow B \to M \to 0$ with $\iota \otimes_B M = 0$. So we have the following non-split projective presentation in $\rmod {\tilde{A}}$: $0 \to (M,M,0) \to (M,B,id) \to X \to 0$. However, $(M,M,0) = (M,0,0) \oplus X$, and $\pd {\tilde{A}}{(M,0,0)} = \rgd A = n$ by (the proof of) Lemma \ref{test}.  
It follows that $\pd {\tilde{A}}{X} = \infty$.
\end{example}

By Theorem \ref{findim}, the fact that $\rfd {\tilde{A}} = n + 1$ implies that the tilting right $\tilde{A}$-module $\tilde{T}_f$ exists and has projective dimension $n+1$. Our main result gives a complete description of this module in the case when $\rgd {A} < \infty$:

\begin{theorem}\label{main} Assume $\rgd {A} = n < \infty$. Let $T = \bigoplus_{1\ \leq i \leq q} E(S_i)$ be the minimal injective cogenerator of $\rmod A$ (see Corollary \ref{finiteglobal}). Let $\iota : M \to T$ be the inclusion of the socle of $T$ into $T$, and $\tilde{T}_f = (T,B,\iota) \oplus (0,B,0)$. 

Then $\tilde{T}_f$ is a $(n+1)$-tilting right $\tilde{A}$-module such that $\{ \tilde{T}_f \}^{\perp_\infty} = (\mathcal P ^{< \infty})^{\perp}$.
\end{theorem}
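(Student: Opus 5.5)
The plan is to check (T1)--(T3) of Definition \ref{deftilt} for $\tilde{T}_f$ directly, using the description of right $\tilde{A}$-modules of finite projective dimension from Section \ref{rfind}. Then I would identify the tilting class via Proposition \ref{A-description} and Theorem \ref{fintype}.

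\emph{(T1).} Lemma \ref{test} gives $\pd{\tilde{A}}{(0,B,0)} = n+1$. For $(T,B,\iota)$, the second component $B$ is projective, so Lemma \ref{charproj} applies, and its syzygy $(N^\prime,0,0)$ has $\pd{A}{N^\prime} \leq n = \rgd A$. Hence $\pd{\tilde{A}}{\tilde{T}_f} = n+1$.

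\emph{Key vanishing (T2).} I would prove the stronger claim that $\Ext i{\tilde{A}}{X}{Y} = 0$ for all $i \geq 1$, every $X = (N,P,f)$ of finite projective dimension, and every $Y \in \Add(\tilde{T}_f)$. Any direct sum of copies of $\tilde{T}_f$ has the form $(T^{(\kappa)},B^{(\kappa)},\iota^{(\kappa)}) \oplus (0,B^{(\lambda)},0)$, and $T^{(\kappa)}$ is again an injective $A$-module with socle the image of $M^{(\kappa)}$, so it suffices to treat $Y$ of this shape. By Lemma \ref{projdim}, $P$ is projective. If $X$ is not projective, use the presentation $(\ast\ast\ast)$ of Lemma \ref{charproj}. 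Dimension shifting together with Lemma \ref{vanish} (the first component of $Y$ is injective) kills $\Ext{i}{\tilde{A}}{X}{Y}$ for $i \geq 2$.

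For $i=1$, I must show that every map $(N^\prime,0,0) \to Y$ extends over $(F,0,0)\oplus(P\otimes_B M,P,id)$. The summand $(0,B^{(\lambda)},0)$ receives no nonzero maps from $(N^\prime,0,0)$, so only the first summand of $Y$ matters. Given $g : N^\prime \to T^{(\kappa)}$, extend it to $F \oplus (P\otimes_B M)$ by injectivity. Its restriction to the semisimple module $P\otimes_B M$ lands in the socle, so it equals $\iota^{(\kappa)}(\beta\otimes_B M)$ for a suitable $\beta : P \to B^{(\kappa)}$. This uses that every $A$-map $S_i \to M_i$ is induced by a $B$-map $B_i \to B$, which I would check on $1 \mapsto a + bx$. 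The pair thus obtained is a morphism in $\catC_{\tilde{A}}$, so the extension exists. This yields (T2), and also $\mathcal P \subseteq {}^{\perp_\infty}\Add(\tilde{T}_f)$.

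\emph{(T3).} The sequences $0\to (T,0,0)\to(T,B,\iota)\to(0,B,0)\to 0$ and $0\to(M,B,id)\to(T,B,\iota)\to(T/M,0,0)\to 0$ are exact. So the projective $(M,B,id)$ and every $(I,0,0)$ with $I$ injective can be coresolved by $\Add(\tilde{T}_f)$ in finitely many steps. For $(A,0,0)$ I would start from the injective coresolution $0\to A\to T_0\to\dots\to T_n\to 0$ in $\rmod A$ and splice it with the length-one coresolutions of the $(T_j,0,0)$ by iterated pushouts. Concretely: embed $(A,0,0)$ into $(T_0,B^{a},\iota)$, whose cokernel is again of the form $(\bar N,\bar P,\bar f)$ with $\bar P$ projective, and repeat. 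At each step the projective dimension of the $A$-component of the cokernel decreases, using $\rgd A = n$, until the cokernel lies in $\Add(\tilde{T}_f)$. I expect this bookkeeping to be the main obstacle. If it gets messy, I would fall back on the general fact that the class of modules with finite $\Add(\tilde{T})$-coresolutions is closed under cokernels of monomorphisms once (T1) and (T2) hold.

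\emph{Tilting class.} Proposition \ref{A-description} together with the vanishing above gives $\mathcal A_{\tilde{T}_f} = \mathcal P \cap {}^{\perp_\infty}\Add(\tilde{T}_f) = \mathcal P$. Theorem \ref{fintype} then gives $\{\tilde{T}_f\}^{\perp_\infty} = (\mathcal A_{\tilde{T}_f}^{<\omega})^{\perp_\infty} = (\mathcal P^{<\infty})^{\perp_\infty}$. This equals $(\mathcal P^{<\infty})^{\perp}$ because $\mathcal P^{<\infty}$ is closed under syzygies.
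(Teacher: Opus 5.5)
Your proof is correct in outline, but it distributes the work differently from the paper; (T1) is essentially the same.

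\emph{Comparison with the paper.} For (T2) the paper reduces, via finite presentation of $\tilde{T}_f$, to Ext-groups between summands of $\tilde{T}_f$. It handles $i\geq 2$ by syzygies and Lemma \ref{vanish}, and checks $i=1$ by an explicit splitting analysis for $(E(S_i),B_i,\iota_i)$ and $(0,B_i,0)$. It then proves Lemma \ref{strong}, which gives a finite $\Add(\tilde{T}_f)$-coresolution of \emph{every} module of finite projective dimension. This yields (T3) and $\mathcal P\subseteq\mathcal A_{\tilde{T}_f}$ via Proposition \ref{tiltclasses}(ii).

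You instead prove $\Ext{i}{\tilde{A}}{X}{Y}=0$ for all $X\in\mathcal P$ and $Y\in\Add(\tilde{T}_f)$ in one stroke. The key input is that $\Hom{B}{P}{B^{(\kappa)}}\to\Hom{A}{P\otimes_B M}{M^{(\kappa)}}$ is onto for projective $P$. This gives (T2) with no case analysis and no finite-presentation argument. Combined with Proposition \ref{A-description}, it gives $\mathcal A_{\tilde{T}_f}=\mathcal P$, so you only have to coresolve the regular module.

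The paper's route produces explicit coresolutions of all of $\mathcal P$; yours is shorter. Moreover, your lifting property is exactly what is tacitly needed in the first step of the paper's Lemma \ref{strong}: there the $B^{(\lambda)}$-component of $\beta$ must lift $\alpha f$ for $(\alpha,\beta)$ to be a morphism when $f\neq 0$.

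\emph{Repairs needed in your (T3) paragraph.}

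First, the invariant that drops at each step is the injective dimension of the $A$-component. Projective dimension can increase, e.g.\ from $A$ to $T_0/A$ when $A$ is not injective.

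Second, embedding a cokernel $(\bar N,\bar P,\bar f)$ with $\bar f\neq 0$ again needs your lifting. Let $\alpha:\bar N\to E(\bar N)$ be the envelope and let $\sigma$ identify $Q\otimes_B M$ with the socle of $E(\bar N)$. Pick $\beta$ with $\sigma(\beta\otimes_B M)=\alpha\bar f$, and map by $(\alpha,(\beta,\mathrm{id}))$ into $(E(\bar N),Q,\sigma)\oplus(0,\bar P,0)$. Iterate until the $A$-component is $0$; stopping when it is merely injective is not enough, since $(T,0,0)\notin\Add(\tilde{T}_f)$. Taking injective envelopes throughout keeps $\bar f=0$ and avoids the lifting.

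Third, the two displayed sequences do not finish $(M,B,\mathrm{id})$, because $T/M$ need not be injective. It needs the same procedure as $(A,0,0)$, which works since the injective dimension of $T/M$ is at most $n$.

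Finally, your fallback is false as stated. For $T=R$, the modules with finite $\Add(R)$-coresolutions form $\mathcal P_0$. This class is not closed under cokernels of monomorphisms as soon as some module has projective dimension $1$. What is true, and what your pushout splicing actually uses, is closure under extensions and under kernels of epimorphisms. That holds whenever (T2) holds, so you have it.
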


\begin{proof} First, we verify conditions (T1)-(T3) from Definition \ref{deftilt} for $\tilde{T}_f$.

(T1) By Lemma \ref{test}, $\pd{\tilde{A}}{(0,B,0)} = n + 1$. Since $\iota$ is monic, the decomposition of $P = B$ from Lemma \ref{extension} is trivial. If $(T,B,\iota)$ is not projective, then Lemma \ref{extension} yields $\pd{\tilde{A}}{(T,B,\iota)} = \pd{A}{\coker f} \leq n$. Thus $\pd {\tilde{A}}{\tilde{T}_f} = n + 1$. 

(T2) The right $\tilde{A}$-module $(0,B,0)$ is injective by Proposition \ref{simp-proj-inj}(iii) and $T_f$ is finitely presented, so by \cite[6.6]{GT}, we only have to prove the vanishing of 
the Ext-groups $\Ext i{\tilde{A}}{\tilde{T}_f}{\tilde{T}_f}$ and $\Ext i{\tilde{A}}{(0,B,0)}{\tilde{T}_f}$ for each $1 \leq i < \infty$. 

By (the proof of) Lemma \ref{test}, the first syzygy of $(0,B,0)$ is $(M,0,0)$. Similarly, if $X = (T,B,\iota)$ is not projective, then by Lemma \ref{charproj}, its first syzygy is $(N^\prime,0,0)$ where $N^\prime$ is the first syzygy of the right $A$-module $T/M$. So by Lemma \ref{vanish} and Remark \ref{pd-id}, in order to prove the vanishing of the two Ext$^i$-groups for each $2 \leq i < \infty$, it suffices to show that the Ext-groups $\Ext {i-1}{A}{M}{T}$ and $\Ext {i}{A}{T/M}{T}$ vanish for all $2 \leq i < \infty$. However, this is immediate, since $T$ is an injective right $A$-module. 

Notice that $\iota = \bigoplus_{1 \leq i \leq q} \iota_i$ where $\iota_i : S_i = B_i \otimes _B M \to E(S_i)$ is the inclusion of the socle $S_i$ of $E(S_i)$ into $E(S_i)$. Hence $X = \bigoplus_{1 \leq i \leq q} T_i$ where $T_i = (E(S_i),B_i,\iota_i)$. So in order to show that $\Ext 1{\tilde{A}}{X}{X} = 0$, it suffices to prove that $\Ext 1{\tilde{A}}{T_i}{T_j} = 0$ for all $1 \leq i,j \leq q$. 

Let 

$$(\ddagger) \quad 0 \to (E(S_j),B_j,\iota_j) \overset{(\alpha,\beta)}\longrightarrow (N,P,f) \overset{(\gamma,\delta)}\longrightarrow (E(S_i),B_i,\iota_i) \to 0$$ 

be a short exact sequence in $\rmod {\tilde{A}}$. W.l.o.g., $\alpha$ and $\beta$ are split inclusions, and $\gamma$ and $\delta$ are split epimorphisms, and we have the commutative diagram 

\begin{tikzcd}[column sep=2.5em, row sep=2.2em]
0 \arrow[r] & S_j \arrow[r, "\beta \otimes_B M"] \arrow[d, "\iota_j"] & P \otimes _B M \arrow[r, "\delta \otimes _B M"] \arrow[d, "f"] & S_i \arrow[r] \arrow[d, "\iota_i"] & 0 \\
0 \arrow[r] & E(S_j) \arrow[r, "\alpha"] & N \arrow[r, "\gamma"] & E(S_i) \arrow[r] & 0
\end{tikzcd}

where $\beta \otimes_B M$ is a split inclusion. Let $C = \im {\beta} \otimes_B M$ and $D = \im {\delta^\prime} \otimes_B M$ where $\delta^\prime$ is a split monomorphism such that $\delta \delta ^\prime = id$. Then $C \oplus D = P \otimes_B M$. Since $\gamma$ vanishes on $f(C)$ while $\gamma$ is monic on $f(D)$, we infer that $f(C) \cap f(D) = 0$. By the commutativity of the diagram, $E(S_j) = E(f(C))$ and $E(f(C)) \oplus E(f(D)) = N$. Define $\beta^\prime : P \to B_j$ by $\beta^\prime \restriction \im {\beta} = \beta ^{-1}$ and $\beta^\prime \restriction \im {\delta^\prime} = 0$. Similarly, $\alpha^\prime : N \to E(S_j)$ is defined by $\alpha^\prime \restriction E(f(C)) = \alpha ^{-1}$ and $\alpha^\prime \restriction E(f(D)) = 0$. Then $\iota_j (\beta^\prime \otimes _B M) = \alpha^\prime f$, whence the sequence $(\ddagger)$ splits.

We have also the decomposition $(0,B,0) = \bigoplus_{1 \leq i \leq q} (0,B_i,0)$ in $\rmod {\tilde{A}}$. So it remains to prove that for all $1 \leq i,j \leq q$, $\Ext 1{\tilde{A}}{(0,B_i,0)}{T_j} = 0$. Let 

$$(\dagger \ddagger) \quad 0 \to (E(S_j),B_j,\iota_j) \overset{(id,\beta)}\longrightarrow (N,P,f) \overset{(0,\delta)}\longrightarrow (0,B_i,0) \to 0$$ 

be a short exact sequence in $\rmod {\tilde{A}}$. If $i \neq j$, then $(\dagger \ddagger)$ splits as $\Hom {A}{S_j}{S_i} = 0$. For $i = j$, we have the commutative diagram 

\begin{tikzcd}[column sep=2.5em, row sep=2.2em]
0 \arrow[r] & S_i \arrow[r, "\beta \otimes_B M"] \arrow[d, "\iota_j"] & P \otimes_B M = S_i \oplus S_i \arrow[d, "f"] \\
& E(S_i) \arrow[r, equal] & N 
\end{tikzcd}

Since $k$ is algebraically closed, the restriction of $f$ to the second copy of $S_i$ is a multiplication by some $x \in B_i$. We define $\beta^\prime : B_i \oplus B_i \to B_i$ as identity on the first component, and multiplication by $x$ on the second. Then $(\beta^\prime \otimes_B M) (\beta \otimes_B M) = id$ and $\iota_i (\beta^\prime \otimes_B M) = f$. Hence $(\dagger \ddagger)$ splits also in this case.     

In order to verify condition (T3), we will actually prove a stronger claim in the following Lemma \ref{strong}: \emph{If $X = (N,P,f)$ is any right $\tilde{A}$-module of finite projective dimension, then $X$ has a finite $\Add {\tilde{T}_f}$-coresolution}. Then clearly condition (T3) will hold, so $\tilde{T}_f$ is an $(n+1)$-tilting module. By Proposition \ref{tiltclasses}(ii), the class $\mathcal A _{\tilde{T}_f}$ contains all the right $\tilde{A}$-modules of finite projective dimension. Since $\mathcal A _{\tilde{T}_f}$ consists of modules of projective dimension $\leq n +1$ by Theorem \ref{chartilt}, we conclude from Theorem \ref{fintype} that $\tilde{T}_f$ is the $(n + 1)$-tilting right $\tilde{A}$-module such that $\{ \tilde{T}_f \}^{\perp_\infty} = (\mathcal P ^{< \infty})^{\perp}$.   
\end{proof}

Ir remains to prove 

\begin{lemma}\label{strong} If $X = (N,P,f)$ is any right $\tilde{A}$-module of finite projective dimension, then $X$ has a finite $\Add {\tilde{T}_f}$-coresolution.
\end{lemma}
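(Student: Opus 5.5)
The plan is to build the coresolution by repeatedly embedding into a module of $\Add{\tilde{T}_f}$, and then to stop after $n+1$ steps by a splitting argument. That argument needs two inputs: the cosyzygies stay in ${}^{\perp_\infty}\Add{\tilde{T}_f}$, and every right $\tilde{A}$-module of finite projective dimension has projective dimension $\leq n+1$. The second input is $\rFd{\tilde{A}} = n+1$, noted at the start of Section \ref{fingd}.

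\textbf{Step 1 (Ext-vanishing).} I would first show that $\Ext i{\tilde{A}}{X}{Y} = 0$ for all $i \geq 1$, every $X$ of finite projective dimension and every $Y \in \Add{\tilde{T}_f}$. Since $\tilde{A}$ is noetherian, $\Add{\tilde{T}_f}$ consists of summands of direct sums of copies of $(T,B,\iota)$ and $(0,B,0)$. The module $(0,B,0)$ is injective, so only $Y = (T^{(\kappa)},B^{(\kappa)},\iota^{(\kappa)})$ needs work; here $T^{(\kappa)}$ is injective and $\iota^{(\kappa)}$ is the socle inclusion.

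By Lemma \ref{projdim}, $P$ is projective. If $X$ is not projective, Lemma \ref{charproj} gives a first syzygy $(N',0,0)$. Hence for $i \geq 2$ we get $\Ext i{\tilde{A}}{X}{Y} \cong \Ext {i-1}{\tilde{A}}{(N',0,0)}{Y}$, which vanishes by Lemma \ref{vanish} because $T^{(\kappa)}$ is injective.

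For $i = 1$ I would use the presentation $(\ast\ast\ast)$ and show that every morphism $(N',0,0) \to Y$, i.e.\ every $A$-map $h: N' \to T^{(\kappa)}$, extends to the projective middle term.
\begin{itemize}
\item By injectivity, extend $h$ to $F \oplus (P \otimes_B M) \to T^{(\kappa)}$.
\item The restriction $h_2$ to the semisimple module $P \otimes_B M$ lands in the socle $M^{(\kappa)}$.
\item Since $P$ is projective and $B^{(\kappa)} \to B^{(\kappa)} \otimes_B M$ is onto, $h_2 \otimes$-lifts to some $\beta : P \to B^{(\kappa)}$ with $\iota^{(\kappa)}(\beta \otimes_B M) = h_2$.
\item This gives morphisms $(F,0,0) \to Y$ and $(P \otimes_B M,P,id) \to Y$ extending $h$.
\end{itemize}
So $\Ext 1{\tilde{A}}{X}{Y} = 0$.

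\textbf{Step 2 (embedding).} Given $X = (N,P,f)$ with $P$ projective, let $\epsilon : N \to E$ be an injective envelope in $\rmod A$, so $E \in \Add T$. Choose a projective $B$-module $Q$ with $Q \otimes_B M \cong \operatorname{soc} E$. Then $Y_0 = (E,Q,\iota_E) \in \Add{(T,B,\iota)}$, where $\iota_E$ is the socle inclusion, because $E$ is a direct sum of copies of the $E(S_i)$.

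The image of $\epsilon f$ lies in $\operatorname{soc} E$, since $P \otimes_B M$ is semisimple. As in Step 1, lift it to $\beta : P \to Q$ with $\iota_E(\beta \otimes_B M) = \epsilon f$. The map
\[
(\epsilon,(\beta,id_P)) : X \to Y_0 \oplus (0,P,0)
\]
is then a monomorphism into $\Add{\tilde{T}_f}$. Its first component is monic since $\epsilon$ is, and its second since $id_P$ is; note $(0,P,0) \in \Add{(0,B,0)}$.

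The cokernel $Z_1$ has finite projective dimension, because all three terms of the sequence do ($\pd{\tilde{A}}{\tilde{T}_f} = n+1$ by (T1)). Also $Z_1 \in {}^{\perp_\infty}\Add{\tilde{T}_f}$ by Step 1. Iterating produces exact sequences $0 \to Z_{j-1} \to T_j \to Z_j \to 0$ with $Z_0 = X$ and $T_j \in \Add{\tilde{T}_f}$.

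\textbf{Step 3 (termination).} Because each $Z_m$ lies in ${}^{\perp_\infty}\Add{\tilde{T}_f}$, dimension shifting along these sequences gives
\[
\Ext 1{\tilde{A}}{Z_m}{Z_{m-1}} \cong \Ext {m}{\tilde{A}}{Z_m}{X}.
\]
The right-hand side vanishes once $m > n+1$, since $\pd{\tilde{A}}{Z_m} \leq \rFd{\tilde{A}} = n+1$. Thus for $m = n+2$ the sequence $0 \to Z_{m-1} \to T_m \to Z_m \to 0$ splits, so $Z_{n+1} \in \Add{\tilde{T}_f}$. This gives a coresolution $0 \to X \to T_1 \to \cdots \to T_{n+1} \to Z_{n+1} \to 0$ of length $n+2$ with all terms in $\Add{\tilde{T}_f}$.

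I expect the main obstacle to be the $i = 1$ case of Step 1, namely checking that the componentwise extension really is a morphism in $\catC_{\tilde{A}}$. This hinges on the socle-lifting argument, which uses the projectivity of $P$ from Lemma \ref{projdim}.
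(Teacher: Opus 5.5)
Your argument is correct, but it reaches the conclusion by a different route than the paper's proof.

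\textbf{The paper's route.} The paper works componentwise. It embeds $X$ into $(T^{(\lambda)},B^{(\kappa)}\oplus B^{(\lambda)},0\oplus\iota^{(\lambda)})$ using an injective embedding of $N$. Each such step lowers the injective dimension of the $A$-component by one, so after at most $n$ steps the $A$-component is injective. That case is then finished by hand:
\begin{itemize}
\item $(\dagger)$ from Lemma \ref{extension} reduces to the case $f$ monic;
\item then $(N,P,f)=(I,P,f)\oplus(J,0,0)$ with $(I,P,f)\in\Add{\tilde{T}_f}$;
\item finally, $(J,0,0)$ has an explicit two-term coresolution.
\end{itemize}

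\textbf{Your route.} You iterate the embedding without tracking injective dimension and stop it homologically. Step 1 proves $\mathcal P\subseteq{}^{\perp_\infty}\Add{\tilde{T}_f}$ directly. The dimension shift $\Ext{1}{\tilde{A}}{Z_m}{Z_{m-1}}\cong\Ext{m}{\tilde{A}}{Z_m}{X}$, combined with $\rFd{\tilde{A}}\le n+1$, then forces $0\to Z_{n+1}\to T_{n+2}\to Z_{n+2}\to 0$ to split. The bound on $\rFd{\tilde{A}}$ comes from Lemmas \ref{projdim} and \ref{charproj} together with $\rgd A=n$, so there is no circularity.

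\textbf{What each buys.} The paper's route gives explicit terms and needs no Ext-vanishing beyond (T2). Yours is more uniform and avoids the injective-case analysis altogether. Moreover, your Step 1 contains (T2) for $\tilde{T}_f$ as the special case $X=\tilde{T}_f$. So it could replace the separate computations with $(\ddagger)$ and $(\dagger\ddagger)$ in the proof of Theorem \ref{main}.

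\textbf{A tacit point in the paper that you make explicit.} For the left square of the paper's diagram to commute, $\beta$ must have a $B^{(\lambda)}$-component lifting $\alpha f$ through the socle. If $\beta$ landed in $B^{(\kappa)}$, commutativity would force $\alpha f=0$. The required component is exactly your socle lift.

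\textbf{One line to add.} The socle lift deserves a short justification:
\begin{itemize}
\item by adjunction, $\Hom{A}{P\otimes_B M}{M^{(\kappa)}}\cong\Hom{B}{P}{\Hom{A}{M}{M^{(\kappa)}}}$;
\item the map $B^{(\kappa)}\to\Hom{A}{M}{M^{(\kappa)}}\cong\End{A}{M}^{(\kappa)}\cong(B/J(B))^{(\kappa)}$ is onto, because the $\tilde e_i$ act on $M$ as the projections onto the $M_i$;
\item projectivity of $P$ then yields $\beta$ with $\beta\otimes_B M$ equal to the given map.
\end{itemize}
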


\begin{proof} By Lemma \ref{projdim}, $P$ is a projective right $B$-module, hence $P \oplus Q = B^{(\kappa)}$ for a cardinal $\kappa$ and a right $B$-module $Q$. Since $T$ is an injective cogenerator of $\rmod A$, there are a cardinal $\lambda$ and a short exact sequence $0 \to N \overset{\alpha}\to T^{(\lambda)} \overset{\gamma}\to \bar{N} \to 0$. Consider the split exact sequence in $\rmod B$: \, $0 \to P \overset{\beta}\to B^{(\kappa)} \oplus B^{(\lambda)} \overset{\delta}\to Q \oplus B^{(\lambda)} \to 0$, where $\delta \restriction B^{(\lambda)} = id$. It induces a short exact sequence in $\rmod {\tilde{A}}$ with the middle term in $\Add {\tilde{T}_f}$ 

$$0 \to (N,P,f) \overset{(\alpha,\beta)}\longrightarrow (T^{(\lambda)},B^{(\kappa)} \oplus B^{(\lambda)},0 \oplus \iota^{(\lambda)}) \overset{(\gamma,\delta)}\longrightarrow (\bar{N},\bar{P},\bar{f}) \to 0$$ 

and the commutative diagram 

\begin{tikzcd}[column sep=2.5em, row sep=2.2em]
0 \arrow[r] 
& P \otimes_B M \arrow[r, "\beta \otimes_B M"] \arrow[d, "f"] 
& M^{(\kappa)} \oplus M^{(\lambda)} \arrow[r, "\delta \otimes _B M"] \arrow[d, "0 \oplus \iota^{(\lambda)}"] 
& \bar{P} \otimes _B M \arrow[r] \arrow[d, "\bar{f}"] 
& 0 \\
0 \arrow[r] 
& N \arrow[r, "\alpha"] 
& {T^{(\lambda)}} \arrow[r, "\gamma"]  
& {\bar{N}} \arrow[r] 
& 0
\end{tikzcd}

where $\bar{P} = Q \oplus B^{(\lambda)}$ is a projective right $B$-module. If $N$ is not injective, then $\id{A}{\bar{N}} = \id{A}{N} - 1$. Since $\rgd A = n$, repeating this procedure for $(\bar{N},\bar{P},\bar{f})$ etc. (at most $n$-times), we eventually arrive at an $\tilde{A}$-module $(N,P,f)$ of finite projective dimension such that $N$ is injective and $P$ projective. 

Consider the exact sequence $(\dagger)$ from Lemma \ref{extension}. Since its right-hand term $(0,P^{\prime \prime},0) \in \Add {\tilde{T}_f}$, we can w.l.o.g.\ assume that $f$ is a monomorphism. Indeed, the desired $\Add {\tilde{T}_f}$-coresolution of $(N,P,f)$ can then be obtained using the pushout of the first term/embedding of the $\Add {\tilde{T}_f}$-coresolution of $(N,P^\prime,f^\prime)$ and the left-hand part/embedding from $(\dagger)$, and the fact that $\Ext 1{\tilde{A}}{X}{Y} = 0$ for all $X, Y \in \Add {\tilde{T}_f}$ by Condition (T2).

However, if $f$ is a monomorphism, $N$ is injective and $P$ projective, then $N \cong \bigoplus_{1 \leq i \leq q} E(S_i)^{(U_i)}$ and $P \cong \bigoplus_{1 \leq i \leq q} B_i^{(V_i)}$ for some sets $V_i \subseteq U_i$ ($1 \leq i \leq q$). Then the $\tilde{A}$-module $(N,P,f)$ decomposes as $(N,P,f) = (I,P,f) \oplus (J,0,0)$ where $I = E(P \otimes_B M) \cong \bigoplus_{1 \leq i \leq q} E(S_i)^{(V_i)}$, $J \cong \bigoplus_{1 \leq i \leq q} E(S_i)^{(U_i \setminus V_i)}$, and $f : P \otimes_B M \to I$ is an isomorphism. Then $(I,P,f)$ is isomorphic to a direct sum of the modules of the form $(E(S_i),\tilde{e}_iB,\iota_i)$ where $1 \leq i \leq q$.

Let $\kappa = \max_{1 \leq i \leq q} \kappa_i$. Then $(I,P,f)$ is a direct summand in $(T,B,\iota)^{(\kappa)}$, so $(I,P,f) \in \Add {\tilde{T}_f}$. Finally, $(J,0,0)$ has an $\Add {\tilde{T}_f}$ coresolution $0 \to (J,0,0) \to (J,Q,\sigma) \to (0,Q,0) \to 0$ where $Q$ is a projective right $B$-module such that $Q \otimes M$ is the socle of $J$ and $\sigma$ is its embedding into $J$.  
\end{proof}

\medskip
\begin{openp}\label{op} \rm 

(i) Assume that $\rfd {A} = n$ and $T_f$ is the $n$-tilting right $A$-module such that $\{ T_{f} \}^{\perp_\infty} = (\mathcal P ^{< \infty})^{\perp}$. By Remark \ref{pd-id} and Corollary \ref{cordim}, $\rfd {\tilde{A}} = m$ where $n \leq m \leq n + 1$. 

\medskip
\emph{What is the structure of the $m$-tilting right $\tilde{A}$-module $\tilde{T}_f$ such that $\{ \tilde{T}_{f} \}^{\perp_\infty} = (\mathcal P ^{< \infty})^{\perp}$? Can $\tilde{T}_f$ be expressed simply in terms of $T_f$?}

\medskip 
In the general case, some information is contained in Theorem \ref{generalrel}. In the particular case when $A = B$, we can take $T_f = A$, and by Proposition \ref{simcase} $\tilde{T}_f = (T_f,0,0) \oplus (M,B,id)$. In this case, both $T_f$ and $\tilde{T}_f$ are progenerators, in $\rmod A$ and $\rmod {\tilde{A}}$, respectively.
 
In the case when $\rgd{A} = n < \infty$, we can take $T_f = \bigoplus_{1 \leq i \leq q} E(S_i)$, and by Theorem \ref{main}, $\tilde{T}_f = (T_f,M,\iota) \oplus (0,B,0)$. So $T_f$ has projective dimension $n$, while $\tilde{T}_f$ has projective dimension $n+1$. 

\medskip
(ii) Assume the setting of Theorem \ref{generalrel}, so let $n \geq 1$ and $X$ be the $n$-tilting right $\tilde{A}$-module such that $\{ X \}^{\perp_\infty} = (\mathcal P ^{< \infty})^{\perp}$ in $\rmod {\tilde{A}}$. 

\emph{Is the ($n-1$)-tilting class $\mathcal T ^\prime = \{ N^\prime \}^{\perp_\infty}$ in $\rmod A$ equal to $(\mathcal P ^{< \infty})^{\perp}$?} 

\medskip
This is true when $n = \rgd{A} < \infty$, since then by Theorem \ref{main}, 
$$X^\prime = \Omega_{\tilde{A}}(\tilde{T}_f) = (\Omega_A(T_f/\mbox{Soc}(T_f)),0,0) \oplus (M,0,0),$$ 
and $M ^{\perp} = \mathcal I _0$ is the $n$-tilting class of all injective right $A$-modules which is equal to $(\rfmod {A})^{\perp}$. Notice however that neither $X^\prime \in \rmod {\tilde{A}}$ nor $N^\prime = \Omega_A(T_f/\mbox{Soc}(T_f)) \oplus M \in \rmod A$ are tilting modules, as they both fail condition (T2).    
\end{openp}

\end{document}